\newcommand{\titl}{A Closer Look at Covering Number Bounds for Gaussian Kernels}
\newcommand{\keyw}{
Gaussian kernels, Covering numbers
}
\setlist{itemsep=0pt}
\newtheoremstyle{famous} 
{} % Space above 
{} % Space below 
{\itshape}%{\rmfamily} % Bodyfont
{} % Indent
{\bfseries\sffamily\boldmath} % Headfont font
{} % Punctuation after thm head 
{\newline} % Space after thm head
{\boldmath\thmnumber{\begin{footnotesize}#2\end{footnotesize} }\thmnote{#3}}
\newtheoremstyle{kursiv}
{} % Space above 
{} % Space below 
{\itshape} % Bodyfont
{} % Indent
{\bfseries\sffamily\boldmath} % Headfont font
{} % Punctuation after thm head 
{ } % Space after thm head
{\boldmath\thmnumber{\begin{footnotesize}#2\end{footnotesize} }\thmname{#1 }\thmnote{(#3)}}
\newtheoremstyle{normal}
{} % Space above 
{} % Space below 
{\rmfamily} % Bodyfont
{} % Indent
{\bfseries\sffamily\boldmath} % Headfont font
{} % Punctuation after thm head 
{ } % Space after thm head
{\boldmath\thmnumber{\begin{footnotesize}#2\end{footnotesize} }\thmname{#1 }\thmnote{(#3)}}
\newtheoremstyle{oNum}
{} % Space above 
{} % Space below 
{\rmfamily} % Bodyfont
{} % Indent
{\bfseries\sffamily\boldmath} % Headfont font
{} % Punctuation after thm head 
{\newline} % Space after thm head
{\thmname{#1 }\thmnote{(#3)}}
\def\@endtheorem{\endtrivlist}% NEW no new paragraph
\theoremstyle{kursiv}
\newtheorem{thm}{Theorem}[section]
\newtheorem{lem}[thm]{Lemma}
\theoremstyle{normal}
\theoremstyle{famous}
\DeclareFontFamily{U}{matha}{\hyphenchar\font45}
\DeclareFontShape{U}{matha}{m}{n}{
      <5> <6> <7> <8> <9> <10> gen * matha
      <10.95> matha10 <12> <14.4> <17.28> <20.74> <24.88> matha12
      }{}
\DeclareSymbolFont{matha}{U}{matha}{m}{n}
\DeclareFontFamily{U}{mathx}{\hyphenchar\font45}
\DeclareFontShape{U}{mathx}{m}{n}{
      <5> <6> <7> <8> <9> <10>
      <10.95> <12> <14.4> <17.28> <20.74> <24.88>
      mathx10
      }{}
\DeclareSymbolFont{mathx}{U}{mathx}{m}{n}
\DeclareMathDelimiter{\vvvert}{0}{matha}{"7E}{mathx}{"17}
\DeclareFontFamily{U}{mathx}{\hyphenchar\font45}
\DeclareFontShape{U}{mathx}{m}{n}{
      <5> <6> <7> <8> <9> <10>
      <10.95> <12> <14.4> <17.28> <20.74> <24.88>
      mathx10
      }{}
\DeclareSymbolFont{mathx}{U}{mathx}{m}{n}
\DeclareMathAccent{\widecheck}{0}{mathx}{"71}
\DeclareMathAccent{\wideparen}{0}{mathx}{"75}
\newcounter{todono}
\renewcommand{\thetodono}{(\arabic{todono})}
\newcommand{\todo}[2]{\stepcounter{todono}%\ignorespaces
$^{\text{\thetodono}}$\marginline{
\begin{singlespace}
\footnotesize
\textbf{{\tiny\thetodono} #1}\par #2
\end{singlespace}}%
\addcontentsline{lotodo}{table}{#1 (#2)}}
\newcommand{\R}{{\mathbb R}}
\newcommand{\N}{{\mathbb N}}
\DeclareMathOperator{\Id}{Id}
\DeclareMathOperator{\spanning}{span}
\DeclareMathOperator{\rank}{rank}
\newcommand{\lbracket}[2][]{
\def\tempsize{#1}
\ifx\tempsize\empty #2
\else\csname #1l\endcsname{#2}
\fi}
\newcommand{\rbracket}[2][]{
\def\tempsize{#1}
\ifx\tempsize\empty #2
\else\csname #1r\endcsname{#2}
\fi}
\def\savedexp{} % storage for the exponent
\def\newfunction#1{%

% define #1temp
\expandafter\def\csname#1\endcsname{\expandafter\futurelet\expandafter\next\csname#1temp\endcsname}
\expandafter\def\csname#1temp\endcsname{%
\ifx^\next \expandafter\expandafter\csname#1E\endcsname
\else \expandafter\expandafter\csname#1N\endcsname
\fi}

% define #1E
\expandafter\def\csname#1E\endcsname^##1{
\def\savedexp{{##1}}%
\csname#1X\endcsname}

% define #1N
\expandafter\def\csname#1N\endcsname{
\def\savedexp{}%
\csname#1X\endcsname}
}%end newfunction
\newcommand{\cuball}[1]{\overline{B}_{#1}}% closed unit ball in space/of norm #1 with center 0
\newcommand{\ouball}[1]{\mathring{B}_{#1}}% open unit ball in space/of norm #1 with center 0
\newcommand{\entropyX}[3][]{
\varepsilon_{#2}
\ifx\savedexp\empty
\else
^{\savedexp}
\fi 
\lbracket[#1](#3\rbracket[#1])
}
\newcommand{\entropydX}[3][]{
e_{#2}
\ifx\savedexp\empty
\else
^{\savedexp}
\fi 
\lbracket[#1](#3\rbracket[#1])
}
\newcommand{\coveringX}[3][]{
\mathcal{N}
\ifx\savedexp\empty
\else
^{\savedexp}
\fi 
\lbracket[#1](#3,#2\rbracket[#1])
}
\newcommand{\coveringiX}[3][]{
\mathcal{N}_0
\ifx\savedexp\empty
\else
^{\savedexp}
\fi 
\lbracket[#1](#3,#2\rbracket[#1])
}
\newcommand{\packingX}[3][]{
\mathcal{P}
\ifx\savedexp\empty
\else
^{\savedexp}
\fi 
\lbracket[#1](#3,#2\rbracket[#1])
}
\newcommand{\coveringlX}[3][]{
\mathcal{H}
\ifx\savedexp\empty
\else
^{\savedexp}
\fi 
\lbracket[#1](#3,#2\rbracket[#1])
}
\newcommand{\sfrac}[2]{{#1}/{#2}}
\newcommand{\eqspace}{\,}
\newcommand{\emb}[3][]{
I_{#2}
\ifx #3\empty%
\else%
\lbracket[#1][#3\rbracket[#1]]%
\fi}
\let\cuball=\uball
\newcommand{\lamWo}{W}      % lambert's W-function upper branch
\newcommand{\ef}[1]{p_{#1}}       % epsilon-fucntion
\newcommand{\efi}[1]{p_{#1}^{-1}} % epsilon-fucntion inverse
\newcommand{\efii}[1]{h_{#1}}     % function related to the inverse of the epsilon-fucntion
\newcommand{\qs}{q_{\sigma}}
\newcommand{\vecsigma}{{\bm{\sigma}}}
\newcommand{\ada}[1]{#1.}
\renewcommand{\todo}[2]{} % todos nicht anzeigen
\renewcommand{\thanks}[1]{}
\renewcommand{\textsuperscript}[1]{}
\begin{document}

%-------------------------------------------------------
% title page
%-------------------------------------------------------

\title{\titl\texorpdfstring{\thanks{This research did not receive any specific grant from funding agencies in the public, commercial, or not-for-profit sectors.}}{}}

\author{%
Ingo Steinwart\texorpdfstring{\textsuperscript{a}}{} and 
Simon Fischer\texorpdfstring{\textsuperscript{a,}\thanks{Corresponding author}}{}}

\publishers{\normalsize \textsuperscript{a}%
Institute for Stochastics and Applications\\
Faculty 8: Mathematics and Physics\\
University of Stuttgart\\
70569 Stuttgart Germany \\
\texttt{\small $\{$ingo.steinwart, simon.fischer$\}$@mathematik.uni-stuttgart.de}
}

\date{\normalsize\today}
%\dedication{Für}

\maketitle

\begin{abstract}
% !TeX spellcheck = en_US
% !TeX encoding = ISO-8859-1

We establish some new bounds on the log-covering numbers of (anisotropic) Gaussian
reproducing kernel Hilbert spaces. Unlike previous results in this direction we focus on
small explicit constants and their dependency on crucial parameters such as the kernel bandwidth 
and the size and dimension of the underlying space.

\end{abstract} 

\paragraph{Keywords} \keyw

%-------------------------------------------------------
% lists
%-------------------------------------------------------

%\tableofcontents
%\newpage
%\listoftodos
%\newpage

%-------------------------------------------------------
% content
%-------------------------------------------------------

% !TeX spellcheck = en_US
%-------------------------------------------------------

\section{Introduction}

%-------------------------------------------------------

Gaussian kernels and their reproducing kernel Hilbert spaces (RKHSs) play a central role for kernel-based learning algorithms such as support vector machines (SVMs), see e.g.\ \cite{ScSm2001,CuZh2007,StCh2008}, and Gaussian processes for machine learning, see e.g.\ \cite{RaWi2006,KaHeSeSr2018}. For the analysis of such learning algorithms one usually needs to bound both the \emph{approximation error}, which quantitatively describes how well the considered RKHS approximates certain classes of smooth functions, and the \emph{estimation error}, which bounds the uncertainty caused by the statistical nature of the observations the algorithm learns from. Moreover, the estimation error is typically analyzed with the help of bounds on certain entropy- or covering numbers of the involved RKHSs, and in the case of Gaussian RKHSs these bounds crucially depend on quantities such as the considered kernel width. The major focus of this work is to analyze this dependence.
To be more precise, recall that the (isotropic) Gaussian kernels are given by 
\begin{equation}\label{eq:intro:gaussian_isotropic}
k_\sigma(x,x') \coloneqq \exp\bigl(- \sigma^2\| x - x'\|_{\ell_2^d}^2\bigr)
\eqspace, \qquad \qquad x,x'\in X,
\end{equation}
where $X$ is a subset of $\R^d$, $\sigma >0$ is the so-called kernel width, and $\|\cdot\|_{\ell_2^d}$ denotes the Euclidean norm on $\R^d$. Moreover, we write $H_\sigma(X)$ for the corresponding RKHS, see \cite[Chapter 4]{StCh2008} for details about Gaussian RKHSs as well as for a general introduction to RKHSs.

In order to underpin the importance of log-covering number bounds with explicit and well-understood constants we will briefly sketch the analysis of SVMs using the least squares loss and the Gaussian kernel in the following. To this end, let us recall that the covering numbers of a bounded subset $A\subseteq E$ of some Banach space $E$ are defined by 
\begin{equation*}
\covering{\varepsilon}{A}\coloneqq\min\Bigl\{n\in\N:\ \exists x_1,\ldots,x_n\in E:\ A\subseteq \bigcup_{i=1}^n x_i + \varepsilon\cuball{E}\Bigr\}
\eqspace, \qquad \qquad \varepsilon>0,
\end{equation*}
where $\cuball{E}$ denotes the closed unit ball of $E$. Moreover, for a bounded linear operator $T:E\to F$ between two Banach spaces $E$ and $F$, the log-covering numbers are $\coveringl{\varepsilon}{T}\coloneqq\log(\covering{\varepsilon}{T\cuball{E}})$. Now, if $X\subseteq \R^d$ is bounded it is well-known that for all $\sigma\geq 1$ and $p\in (0,1)$ we have a constant $K_{X,\sigma,p}$ such that 
\begin{equation}\label{eq:intro:example_upper_bound}
\coveringl[big]{\varepsilon}{\Id:H_\sigma(X) \to \ell_\infty(X)} \leq K_{X,\sigma,p} \cdot \varepsilon^{-p}
\eqspace, \qquad \qquad \varepsilon\in (0,1],
\end{equation}
where $\Id : H_\sigma(X) \to \ell_\infty(X)$ denotes the canonical embedding of $H_\sigma(X)$ into the space $\ell_\infty(X)$ of bounded functions $f:X\to \R$ equipped with the usual supremum norm $\|\cdot\|_\infty$, see e.g.\ \cite[Theorem~6.27 and Exercise~6.8]{StCh2008}. However, the dependency of the constant $K_{X,\sigma,p}$ on $X$, $\sigma$, and $p$ is far from being well-understood.

Let us now briefly describe how this dependency influences the learning performance guarantees of SVMs using the least squares loss and a Gaussian kernel $k_\sigma$. To this recall that for a dataset $D= ((x_1,y_1),\ldots,(x_n,y_n) )\in(X\times[-1,1])^n$ of length $n$ and a regularization parameter $\lambda >0$, such an SVM produces a  decision function $f_{D,\lambda,\sigma}:X\to[-1,1]$ that minimizes some regularized empirical error quantity over $H_\sigma(X)$, in order to recover the true but unknown regression function $f^\ast:X\to[-1,1]$. In this scenario \cite[Theorem 7.23]{StCh2008} in combination with \eqref{eq:intro:example_upper_bound} gives, for every $f_0\in H_\sigma(X)$, the following over all error bound 
\begin{equation}\label{eq:intro:oi:approximation}
\|{f}_{D,\lambda,\sigma} - f^\ast\|_{L_2(\nu)}^2
\leq 9 \bigl(\lambda \|f_0\|_{H_\sigma(X)}^2 + \|f_0 - f^\ast\|_{L_2(\nu)}^2\bigr) 
 + C_p \cdot \frac {K_{X,\sigma,p}}{\lambda^{\sfrac{p}{2}} \, n}
+ \tilde{\epsilon}(n, \lambda, \tau, \sigma, p, f_0)
\eqspace,
\end{equation}
which holds true with probability not less than $1 - 3 e^{-\tau}$. Here, $C_p$ is a constant whose dependency on $p$ is explicitly given, and $\tilde{\epsilon}(n, \lambda, \tau, \sigma, p, f_0)$ is an additional error term, that for common choices of $f_0$, $\lambda$, $p$, $\tau$, and $\sigma$ is dominated by the term
\begin{equation*}
\epsilon(n,\lambda,\sigma,p)\coloneqq
C_p \cdot \frac {K_{X,\sigma,p}}{\lambda^{\sfrac{p}{2}} \, n}
\eqspace,
\end{equation*}
which in the following we call \emph{estimation error.} 
Together with $\tilde \epsilon(n, \lambda, \tau, \sigma, p, f_0)$, the estimation error bounds the error
caused by statistical fluctuations.
In contrast, the first error term in 
\eqref{eq:intro:oi:approximation}, which does not depend on the sample size $n$, refers to the 
\emph{approximation error}. 

It is well-known that the Gaussian RKHS $H_\sigma(X)$ only contains $C^\infty$-functions and that $H_\sigma(X)$ is dense in $L_p(\nu)$ for all $p\in [1,\infty)$ and all finite measures $\nu$ on $X$. Moreover, if $X$ is compact, then $H_\sigma(X)$ is also dense in $C(X)$. Again, we refer to \cite[Chapter 4]{StCh2008} for details. Now recall that these denseness results guarantee, for example,  that   the \emph{minimal approximation error}
\begin{equation*}
A(\lambda, \sigma,f^\ast) 
\coloneqq \inf\Bigl\{ \lambda \|f_0\|_{H_\sigma(X)}^2 + \|f_0-f^\ast\|_{L_2(\nu)}^2\, :\, f_0\in H_\sigma(X)\Bigr\}
\end{equation*}
satisfies $A(\lambda, \sigma,f^\ast) \to 0$ for $\lambda \to 0$ and \emph{fixed} $\sigma>0$ and $f^\ast\in L_2(\nu)$. For $n\to\infty$ and $\lambda\to 0$ with $\lambda^{\sfrac{p}{2}}n \to \infty$ this shows that \eqref{eq:intro:oi:approximation} vanishes, i.e.\ SVMs using the least squares loss and a \emph{fixed} Gaussian kernel can learn in a purely asymptotic sense, see e.g.\ \cite[Chapters 5, 6, and 9]{StCh2008} for details. However, a more detailed analysis that includes convergence rates for the learning process, requires convergence rates for the approximation error and especially for $A(\lambda, \sigma,f^\ast) \to 0$. Unfortunately, it has been shown in \cite{SmZh2002} that for fixed $\sigma>0$ any polynomial rate even for the minimal approximation error $A(\lambda, \sigma,f^\ast) \to 0$ is impossible if $f\not\in C^\infty$, and the latter is an unacceptable restriction from a learning theoretical point of view.

To address this issue and to be better aligned with empirical knowledge that strongly suggest to vary the width $\sigma$ with the data set, one usually investigates the learning behavior in cases in which we have $\lambda \to 0$ and $\sigma \to \infty$ simultaneously. For example, \cite{EbSt2013} shows that for specific combinations of rates for $\lambda \to 0$ and $\sigma \to \infty$ the approximation error converges to 0 with a polynomial speed whenever $f^\ast$ is contained in some Besov space. 

While this approach solves the issues regarding the approximation error, it simultaneously makes the analysis of the estimation error $\epsilon(n,\lambda,\sigma,p)$ more complicated. To be more precise, for fixed $\sigma$ and $p$ the dependence of $K_{X,\sigma,p}$ on these parameters have no influence on the learning rate, however, if we consider $\sigma \to \infty$ the behavior of $K_{X,\sigma,p}$ plays a crucial role for the estimation error. Since it has been recently observed in \cite{FaSt2018} that the learning rates can be further improved, if we additionally let $p=p_n\to 0$ sufficiently slowly, also the dependence of $K_{X,\sigma,p}$ on $p$ is of interest from a learning theoretical point of view. Moreover, the guarantees on the learning performance obviously become better, if, in addition, $K_{X,\sigma,p}$ only depends on \emph{small} universal constants. Therefore, the goal of this work is to derive bounds on $\coveringl{\varepsilon}{\Id:H_\sigma(X)\to\ell_\infty(X)}$ that do not only have a desirable behavior for $\varepsilon\to 0$, but for which we can also control the behavior of the corresponding constants in $\sigma$, $X$, $d$, and if applicable, in $p$.

To this end, we first refine the analysis of \cite{K2011} by carefully controlling the arising constants. It turns out that the final constants have both small absolute values and a reasonable behavior in the dimension $d$. Unfortunately, however, their behavior for $\sigma\to \infty$ is far from being optimal. For this reason, we present another result that relates the log-covering numbers of $\Id:H_\sigma(X)\to\ell_\infty(X)$ to the log-covering numbers of $\Id:H_1(B_2^d)\to\ell_\infty(B_2^d)$, where $B_2^d\subseteq\R^d$ denotes the closed Euclidean unit ball, and to the covering numbers of the underlying space $X$. As a consequence, we do not only obtain a much better behavior for $\sigma\to \infty$, but also log-covering number bounds for \emph{anisotropic} Gaussian kernels, which are defined by
\begin{equation}\label{eq:intro:gaussian_ard}
k_\vecsigma(x,x') \coloneqq \exp\bigl(-\|D_\vecsigma x - D_\vecsigma x'\|_{\ell_2^d}^2\bigr)
\eqspace, \qquad \qquad x,x'\in X,
\end{equation}
where $D_\vecsigma(x_1,\ldots, x_d) \coloneqq (\sigma_1 x_1,\ldots, \sigma_d x_d)$. Note that these kernels are an example of so-called automatic relevance determination (ARD) kernels, which are particularly popular in the Gaussian processes for machine learning context, see e.g.\ \cite[Chapter 5]{RaWi2006}.

The rest of this work is organized as follows: In the next section we present our main results, discuss their consequences, and compare them to results previously obtained in the literature such as \cite{Zh2002,Zh2003,K2011}. All proofs can be found in Section~\ref{sec:proofs}.

% !TeX spellcheck = en_US
%-------------------------------------------------------

\section{Main Results}

%-------------------------------------------------------

This section contains all main results of this work:
In the first subsection we derive bounds on the log-covering numbers of the embedding $\Id: H_\sigma(B_2^d)\to\ell_\infty(B_2^d)$ of the isotropic Gaussian RKHS defined in \eqref{eq:intro:gaussian_isotropic}. In the second subsection we then show how to generalize these bounds to anisotropic Gaussian kernels \eqref{eq:intro:gaussian_ard} on general bounded sets $X\subseteq \R^d$.

%-------------------------------------------------------
\subsection{Isotropic Gaussian Kernels}\label{sec:isotropic}
%-------------------------------------------------------

Before we present the results of this subsection, let us introduce some notation:
if two functions $f,g:(0,\infty)\to(0,\infty)$ satisfy $\lim_{t\to\infty}\sfrac{f(t)}{g(t)} = 1$ we write $f(t)\sim g(t)$ for $t\to\infty$.
Moreover, 
recall that, 
for $k\in\N$ and $t>0$, the
\emph{generalized binomial coefficient} is defined by
\begin{equation*}
\binom{t}{k} \coloneqq \frac{1}{k!}\prod_{i=1}^k(t-k + i)
\eqspace.
\end{equation*}
Note that for $t\in\N$ this definition coincides with the classical definition of binomial coefficients. In the following, generalized binomial coefficients mainly appear in the form
\begin{equation}\label{eq:results:binom}
\binom{t + d}{d} = \frac{1}{d!}\prod_{i=1}^d(t + i)
\end{equation}
where $d\geq 1$ is an integer and $t>0$. Then the
functions $t\mapsto \binom{t + d}{d}$ and $d\mapsto\binom{t+d}{d}$ are increasing and 
the functions $t\mapsto\binom{t + d}{d}\cdot t^{-d}$ and 
$d\mapsto \binom{t+d}{d}\cdot d^{-t}$ are decreasing with 
\begin{equation*}
\binom{t+d}{d} \sim \frac{t^d}{d!}\quad\text{for }t\to\infty
\quad\text{and} \quad
\binom{t+d}{d} \sim \frac{d^t}{\Gamma(t+1)}\quad\text{for }d\to\infty
\eqspace,
\end{equation*}
where $\Gamma$ denotes the Gamma function.
See Lemma~\ref{lem:entropy:binom_behavior} for the non-obvious assertions.
With these preparations our first result reads as follows.

\begin{thm}\label{thm:results:entropy-1}
For all $d\geq 1$, all $\sigma>0$, and all $0<\varepsilon\leq 1$ we have
\begin{equation*}
\coveringl[big]{\varepsilon}{\Id: H_\sigma(B_2^d)\to\ell_\infty(B_2^d)} 
\leq 
\binom{2 e (1+\sigma^2) + d}{d} \cdot e^{-d}\cdot \frac{\log^{d+1}(\sfrac{4}{\varepsilon})}{\log\log^{d}(\sfrac{4}{\varepsilon})}
\eqspace.
\end{equation*}
\end{thm}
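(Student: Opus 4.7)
The plan is to refine the approach of K\"uhn \cite{K2011} while explicitly tracking each constant and its dependence on $d$ and $\sigma$. The starting point is the classical orthonormal basis of $H_\sigma(\R^d)$: applying the multinomial theorem to the product decomposition
\[
k_\sigma(x,y)=e^{-\sigma^2\|x\|^2}\,e^{-\sigma^2\|y\|^2}\,e^{2\sigma^2\langle x,y\rangle}
\]
shows that the functions $\varphi_\alpha(x)\coloneqq ((2\sigma^2)^{|\alpha|}/\alpha!)^{1/2}\,x^{\alpha}\,e^{-\sigma^2\|x\|^2}$, indexed by $\alpha\in\N_0^d$, form an ONB of $H_\sigma(\R^d)$. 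After restriction to $B_2^d$, every $f$ in the closed unit ball of $H_\sigma(B_2^d)$ therefore admits a representation $f=\sum_\alpha c_\alpha\varphi_\alpha$ with $\sum_\alpha c_\alpha^2\le 1$.

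First, I would split $f=f_{<N}+f_{\ge N}$ according to whether the multi-index satisfies $|\alpha|<N$ or $|\alpha|\ge N$, for an integer $N$ to be chosen later. For the high-degree tail, Cauchy--Schwarz combined with the multinomial identity $\sum_{|\alpha|=n}x^{2\alpha}/\alpha!=\|x\|^{2n}/n!$ yields the pointwise estimate
\[
|f_{\ge N}(x)|^2 \;\le\; \sum_{n\ge N} e^{-2\sigma^2\|x\|^2}\,\frac{(2\sigma^2\|x\|^2)^n}{n!},
\]
and the elementary inequality $\sup_{t\ge 0}e^{-t}t^n/n!\le n^n e^{-n}/n!$ together with Stirling produces a super-geometric tail decay $\|f_{\ge N}\|_\infty\le g_\sigma(N)$ as soon as $N$ is taken a constant multiple beyond $2\sigma^2$.

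Next, I would cover the low-degree part. The subspace $F_N\coloneqq\operatorname{span}\{\varphi_\alpha:|\alpha|<N\}$ has dimension $D_N=\binom{N+d-1}{d}$, and the same reproducing-kernel calculation provides an explicit bound $\|f_{<N}\|_\infty\le R_N$ on the unit ball of $H_\sigma$. A standard volumetric argument in the finite-dimensional Euclidean ball then gives a log-covering estimate of order $D_N\cdot\log(1+2R_N/\varepsilon)$, and merging the two coverings via the triangle inequality (under the tail constraint $g_\sigma(N)\le\varepsilon/2$) produces a bound of the announced shape.

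Finally, I would balance the two contributions by choosing $N=N(\varepsilon,\sigma,d)$ with the natural scaling $N\approx \log(4/\varepsilon)/\log\log(4/\varepsilon)$ plus an additive offset of order $2e(1+\sigma^2)$ forced by the tail condition. Plugging this choice back into $\binom{N+d-1}{d}$ and using the monotonicity and growth properties of the generalized binomial coefficient recorded just above the theorem delivers the quoted expression. The principal obstacle is the simultaneous optimisation: the Stirling and multinomial estimates must be carried out sharply enough to produce the explicit prefactor $2e(1+\sigma^2)$ as the argument of the binomial coefficient as well as the $e^{-d}$ factor, and the logarithmic radius $\log(1+2R_N/\varepsilon)$ has to be absorbed into the clean expression $\log^{d+1}(4/\varepsilon)/\log\log^{d}(4/\varepsilon)$ without introducing any further $d$-dependent constants; this careful bookkeeping is precisely what distinguishes the present bound from the asymptotic result of K\"uhn.
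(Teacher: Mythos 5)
Your plan follows the same route as the paper's: decompose via the tensor-Hermite ONB $\varphi_\alpha$ from \cite[Theorem~4.42]{StCh2008}, split the embedding as $I_\sigma\circ P_N + I_\sigma\circ Q_N$ according to $|\alpha|<N$ vs.\ $|\alpha|\ge N$, bound $\|I_\sigma\circ Q_N\|$ by a Poisson-type tail, apply the finite-rank covering bound \eqref{eq:covering:finite_rank} to the low-degree part (this is your ``volumetric argument''; note $\|I_\sigma\circ P_N\|\le 1$ so your $R_N$ can simply be taken equal to $1$), and then choose $N$. This is exactly Kühn's scheme, which the paper packages as Lemma~\ref{lem:entropy:kuehn}.

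Where your sketch goes astray is in the description of the balancing, which is precisely the step you flag as ``the principal obstacle.'' The choice of $N$ is not ``$\log(4/\varepsilon)/\log\log(4/\varepsilon)$ plus an additive offset of order $2e(1+\sigma^2)$.'' In the paper, the tail condition is inverted exactly via Lambert's $W$-function: setting $y=\log(4/\varepsilon)$, the optimal $N$ is $\efii{\sigma}(y) = 2y/\lamWo(y/(e\sigma^2))$ (Lemmas~\ref{lem:entropy:ef} and \ref{lem:entropy:general}). The constant $1+\sigma^2$ then enters \emph{multiplicatively}: Lemma~\ref{lem:entropy:lambW} shows that $\log(y)/\lamWo(y/(e\sigma^2))\le 1+\sigma^2$ for all admissible $y$, giving $\efii{\sigma}(y)\le 2(1+\sigma^2)\cdot y/\log y =: f(\varepsilon)$. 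Next, $y\mapsto y/\log y$ has global minimum $e$, hence $f(\varepsilon)\ge 2e(1+\sigma^2)=:t_0$; this $t_0$, not an additive offset to $N$, is the argument of the binomial coefficient in the statement. Finally the factor $e^{-d}$ and the $\log^{d+1}/\log\log^{d}$ form come from Lemma~\ref{lem:entropy:binom_bound} (the decreasing property of $t\mapsto\binom{t+d}{d}t^{-d}$), which gives $\binom{\efii{\sigma}(y)+d}{d}\le\binom{t_0+d}{d}\cdot(f(\varepsilon)/t_0)^d$, and $(f(\varepsilon)/t_0)^d = e^{-d}(y/\log y)^d$. Multiplying by the remaining $\log(4/\varepsilon)=y$ from Lemma~\ref{lem:entropy:general} yields the stated bound. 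Without this inverse-function and monotonicity machinery, the explicit prefactors cannot be extracted; your outline would reproduce Kühn's asymptotics but not the explicit constant $\binom{2e(1+\sigma^2)+d}{d}\,e^{-d}$.
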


Note that Theorem~\ref{thm:results:entropy-1} recovers the asymptotic behavior of 
$\varepsilon\mapsto \coveringl{\varepsilon}{\Id: H_\sigma(B_2^d)\to\ell_\infty(B_2^d)}$ found by Kühn in \cite{K2011},
which in turn improved the earlier results in \cite{Zh2002,Zh2003}.
By presenting a corresponding 
lower bound on the log-covering numbers, 
\cite{K2011} further shows that this behavior in $\varepsilon$ is optimal.
Unlike the upper bound
in \cite{K2011}, however, Theorem~\ref{thm:results:entropy-1} also provides an upper bound for the behavior in 
$\sigma$ and $d$ that is expressed by the constant
\begin{equation*}
 K_{d,\sigma} \coloneqq \binom{2 e(1+\sigma^2) + d}{d} \cdot e^{-d}
\eqspace.
\end{equation*}

To better understand the behavior of this constant in $d$, let us first consider the case $\sigma = 1$.
Since 
$d\mapsto \binom{4 e + d}{d} \cdot d^{-4 e}$
is decreasing as mentioned above we then find 
\begin{equation*}
C_d
\coloneqq \frac{K_{d,1}}{d^{4 e} e^{-d}} 
\leq \binom{4 e + 1}{1} \cdot 1^{-4 e}
= 4 e + 1 
\approx 11.8731
\end{equation*}
for all $d\geq 1$. Moreover, some numerical calculations show that for $d=1$ we have 
%$K_{1,1}=C_{1}\cdot 1^{4 e} e^{-1} =4+\sfrac{1}{e}\approx 4.3679\leq 5$, 
$K_{1,1}=4+\sfrac{1}{e}\approx 4.3679$, 
while for $d\geq 2$ we have 
%$C_d\leq C_2 = \sfrac{(8 e^2 + 6 e + 1)}{2^{4 e}} \approx 0.0407\leq 0.05$, 
$C_d\leq C_2 \approx 0.0407\leq 0.05$,
and hence we obtain 
\begin{equation}\label{kd1-bound}
K_{d,1} \leq 0.05 \cdot d^{4 e} e^{-d}
\eqspace, \qquad \qquad d\geq 2.
\end{equation}
In this respect note that \cite[Proposition~1]{Zh2002} found a constant behaving like $d^{d+1}$ for a $\log^{d+1}(\sfrac{1}{\varepsilon})$-type bound on the log-covering numbers. Unfortunately, this result is not directly comparable to \eqref{kd1-bound} since \cite{Zh2002} considered the set $X=[0,1]^d$. Since $[0,1]^d\subseteq\sfrac{1}{2} + \sfrac{\sqrt{d}}{2} \cdot B_2^d$ combining \eqref{kd1-bound} with the later established \eqref{kd1-bound-2} for $r = \sfrac{\sqrt{d}}{2}$ and $\sigma=1$, however, we obtain a constant not exceeding $0.05 \cdot d^{4 e} (\sfrac{2 e}{3})^{-d} d^{\sfrac{d}{2}}$ for $X= [0,1]^d$ and $d\geq 2$. In other words, our analysis does improve the above mentioned results of \cite{Zh2002} in both $\varepsilon$ and $d$.

Furthermore, this inequality correctly describes the asymptotic behavior of $K_{d,1}$ for $d\to \infty$, since 
our considerations at the beginning of this section show
\begin{equation*}
\lim_{d\to \infty} C_d
= \lim_{d\to \infty}\binom{4 e + d}{d}\cdot d^{-4 e}
= \frac{1}{\Gamma(4 e + 1)}
\approx 3.4130\cdot 10^{-8}
\eqspace.
\end{equation*}
Finally, some additional numerical calculations give $K_{d,1} \leq 30$ for all $d\geq 1$,
and the maximal value of $K_{d,1}$ is attained at $d=6$.

Let us now consider the behavior of $K_{d,\sigma}$ in $\sigma$ for a fixed $d\geq 1$. 
To this end, we first observe that $K_{d,\sigma}$ is increasing in $\sigma$, and hence we have 
$K_{d,\sigma}>K_{d,0}=\binom{2 e + d}{d} e^{-d}>0$ for all $\sigma>0$. Moreover, 
the representation in \eqref{eq:results:binom} directly gives 
\begin{equation}\label{eq:results:sigma}
\frac{2^d}{d!} (1+\sigma^2)^d
\leq K_{d,\sigma}
\leq \frac{4^d}{d!} (1+\sigma^2)^d
\end{equation}
for all $\sigma>0$ satisfying $2 e (1+\sigma^2) \geq d$. Consequently 
the constant $K_{d,\sigma}$ grows like $\sigma^{2d}$ for $\sigma\to\infty$,
compared to the $\sigma^{2d+2}$-behavior of the already discussed result in \cite{Zh2002}.
Below in Section~\ref{sec:anisotropic} we will see that we can find another constant for the estimate of Theorem~\ref{thm:results:entropy-1} that only grows like $\sigma^d$ for $\sigma\to\infty$.

Our next goal is to show that the size of the constant in Theorem~\ref{thm:results:entropy-1} is significantly 
influenced by the choice of the considered range of $\varepsilon$. More precisely, Theorem~\ref{thm:results:entropy-1} considers the 
maximal range $0<\varepsilon\leq 1$, since we have 
$\|\Id: H_\sigma(B_2^d)\to\ell_\infty(B_2^d)\|=1$, and thus we find 
\begin{equation*}
\coveringl[big]{\varepsilon}{\Id: H_\sigma(B_2^d)\to\ell_\infty(B_2^d)}=0
\eqspace, \qquad \qquad \varepsilon\geq 1.
\end{equation*}
Our next theorem shows that by considering a smaller range for $\varepsilon$, we can substantially decrease the constant 
appearing in the estimate.
For its formulation, we recall that Lambert's $W$-function is the inverse of $t\mapsto te^t$. Note 
that on $(-\sfrac{1}{e},0)$ the inverse is multi-valued and throughout this work 
we use the upper branch $\lamWo:[-\sfrac{1}{e},\infty) \to [-1,\infty)$, which is often denoted by $W_0$ in the literature. Finally, 
recall that $\lamWo$ is increasing and 
$\lamWo(t)\sim\log(t)$ for $t\to\infty$.
Now our second result reads as follows.

\begin{thm}\label{thm:results:entropy-2}
For all $d\geq 1$, all $\sigma > 0$, and 
$0<\varepsilon_0 \leq 4\exp(-e^{1 + \sigma^{-2}})$ we define $y_0 \coloneqq \log(\sfrac{4}{\varepsilon_0})$, $x_0\coloneqq\sfrac{2 y_0}{\lamWo(\frac{y_0}{e\sigma^2})}$, and 
\begin{equation*}
K_{d,\sigma,\varepsilon_0} \coloneqq \binom{x_0 + d}{d} \cdot \Bigl(\frac{\log(y_0)}{y_0}\Bigr)^d
\eqspace.
\end{equation*}
Then for all $0<\varepsilon\leq \varepsilon_0$ we have 
\begin{equation*}
\coveringl[big]{\varepsilon}{\Id: H_\sigma(B_2^d)\to\ell_\infty(B_2^d)} 
\leq K_{d,\sigma,\varepsilon_0} 
\cdot \frac{\log^{d+1}(\sfrac{4}{\varepsilon})}{\log\log^{d}(\sfrac{4}{\varepsilon})}
\eqspace.
\end{equation*}
\end{thm}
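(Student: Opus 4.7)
The plan is to revisit the proof of Theorem~\ref{thm:results:entropy-1} and expose the free parameter that was fixed there. Behind that theorem I expect a one-parameter family of bounds of the form
\begin{equation*}
\coveringl[big]{\varepsilon}{\Id: H_\sigma(B_2^d)\to\ell_\infty(B_2^d)}
\leq \binom{x+d}{d}\cdot F(\varepsilon,x,\sigma)
\eqspace,
\end{equation*}
valid for every $x>0$ and every $\varepsilon$ lying in a certain $x$-dependent admissible range. Theorem~\ref{thm:results:entropy-1} is then obtained by setting $x=2e(1+\sigma^2)$, a value just large enough so that the admissible range swallows all of $(0,1]$. For Theorem~\ref{thm:results:entropy-2} we only need the inequality on the smaller range $(0,\varepsilon_0]$, so we can afford a smaller, $\varepsilon_0$-dependent $x$, which is exactly what $x_0$ will be.

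The first block of work is to re-derive this parametrized estimate, carefully tracking the shape of $F(\varepsilon,x,\sigma)$. The standard ingredients are a monomial orthonormal system of $H_\sigma(B_2^d)$ with explicit $\ell_\infty$-bounds, truncation of expansions $\sum_\alpha c_\alpha e_\alpha$ at total degree $N$, a finite $\delta$-net of the $\binom{N+d}{d}$-dimensional coefficient ball, and balancing of the polynomial truncation error against the discretization error. The integer $N$ that emerges from this balancing behaves like $N\sim \log(4/\varepsilon)/\log\log(4/\varepsilon)$, which explains the universal $\log^{d+1}(\sfrac{4}{\varepsilon})/\log\log^{d}(\sfrac{4}{\varepsilon})$ factor in both theorems.

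The second block is the choice of $x_0$. We pick $x_0$ so that the admissible range for $\varepsilon$ just reaches down to $\varepsilon_0$. The boundary condition has the shape $x\cdot \exp\bigl(\sfrac{x}{2e\sigma^2}\bigr)\sim y_0$ with $y_0=\log(\sfrac{4}{\varepsilon_0})$; the substitution $u = \sfrac{x}{2e\sigma^2}$ turns it into $u\,e^u = \sfrac{y_0}{e\sigma^2}$, which is precisely the defining equation of Lambert's $W$-function. The hypothesis $\varepsilon_0\leq 4\exp(-e^{1+\sigma^{-2}})$ is tailored so that $\sfrac{y_0}{e\sigma^2}\geq -\sfrac{1}{e}$ (in fact well above $0$), so the upper branch $\lamWo$ applies and yields $u=\lamWo(\sfrac{y_0}{e\sigma^2})$, hence $x_0=\sfrac{2y_0}{\lamWo(\sfrac{y_0}{e\sigma^2})}$ as in the statement. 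Plugging this $x_0$ into $F(\varepsilon_0,x_0,\sigma)$, the identity $u\,e^u=\sfrac{y_0}{e\sigma^2}$ collapses the $\sigma$-dependence and leaves exactly the prefactor $(\sfrac{\log(y_0)}{y_0})^d$. A monotonicity step in $\varepsilon$ then propagates the bound from $\varepsilon_0$ down to every $\varepsilon\in(0,\varepsilon_0]$, while the $\varepsilon$-dependent factor $\log^{d+1}(\sfrac{4}{\varepsilon})/\log\log^d(\sfrac{4}{\varepsilon})$ reappears untouched.

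The principal obstacle is the bookkeeping: I must verify that $F(\varepsilon,x_0,\sigma)$ really does simplify to $(\log(y_0)/y_0)^d\cdot \log^{d+1}(\sfrac{4}{\varepsilon})/\log\log^d(\sfrac{4}{\varepsilon})$ after the $\lamWo$-substitution, and that the assumption on $\varepsilon_0$ is precisely the condition needed for $\lamWo$ to be applicable and for $x_0$ to lie in the admissible region determining the validity of the parametrized bound. Once that bookkeeping is clean, the rest of the argument is just the calibration of constants already present in the proof of Theorem~\ref{thm:results:entropy-1}.
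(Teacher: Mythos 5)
Your overall plan captures the right idea and is in the same spirit as the paper's proof: both proofs start from a bound of the shape
\begin{equation*}
\coveringl[big]{\varepsilon}{\Id: H_\sigma(B_2^d)\to\ell_\infty(B_2^d)}
\leq \binom{\efii{\sigma}(\log(\sfrac{4}{\varepsilon})) + d}{d}\cdot \log\bigl(\sfrac{4}{\varepsilon}\bigr)
\end{equation*}
(Lemma~\ref{lem:entropy:general}), where $\efii{\sigma}(y)=\sfrac{2y}{\lamWo(\sfrac{y}{e\sigma^2})}$, and the two theorems differ only in how the binomial coefficient is linearized via Lemma~\ref{lem:entropy:binom_bound}; the "free parameter" you describe is precisely the anchor point $t_0$ used there. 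So the strategy is sound.

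There are, however, two concrete gaps. First, the boundary condition you write for $x_0$ is wrong: it should be $x\log\bigl(\frac{x}{2e\sigma^2}\bigr)=2y_0$, obtained from $\ef{\sigma}(x)=\sfrac{\varepsilon_0}{2}$, and the correct substitution is $u=\log\bigl(\frac{x}{2e\sigma^2}\bigr)$, which gives $ue^u=\frac{y_0}{e\sigma^2}$. Your substitution $u=\frac{x}{2e\sigma^2}$ applied to your equation $xe^{\sfrac{x}{2e\sigma^2}}\sim y_0$ would give $ue^u=\frac{y_0}{2e\sigma^2}$, off by a factor of $2$, so your derivation would not reproduce the stated $x_0$. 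Second, and more importantly, you misidentify the role of the hypothesis on $\varepsilon_0$. You say it ensures $\frac{y_0}{e\sigma^2}\geq-\sfrac{1}{e}$ so that $\lamWo$ applies; but that is automatic as soon as $\varepsilon_0\leq 4$ (then $y_0\geq 0$), so it cannot be the reason for a restriction as strong as $\varepsilon_0\leq 4\exp(-e^{1+\sigma^{-2}})$. The actual purpose of the hypothesis is to guarantee the monotonicity used to pass from $\varepsilon_0$ to all smaller $\varepsilon$: one must show that for $y\geq y_0$,
\begin{equation*}
\efii{\sigma}(y)=\frac{2\log(y)}{\lamWo(\sfrac{y}{e\sigma^2})}\cdot\frac{y}{\log(y)}
\leq \frac{2\log(y_0)}{\lamWo(\sfrac{y_0}{e\sigma^2})}\cdot\frac{y}{\log(y)}
\eqspace,
\end{equation*}
and this requires $y\mapsto\frac{\log(y)}{\lamWo(\sfrac{y}{e\sigma^2})}$ to be decreasing on $[y_0,\infty)$. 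This is exactly what Lemma~\ref{lem:entropy:lambW} provides, and it needs $\frac{y_0}{e\sigma^2}\geq\sigma^{-2}e^{\sigma^{-2}}$, i.e.\ $y_0\geq e^{1+\sigma^{-2}}$, which is your hypothesis. Without this monotonicity your "monotonicity step in $\varepsilon$" is unjustified, since the covering numbers themselves \emph{grow} as $\varepsilon$ decreases; one cannot simply "propagate the bound from $\varepsilon_0$ down."
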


To appreciate Theorem~\ref{thm:results:entropy-2} we note that for $\varepsilon_0\to 0$ we have 
$y_0 \to \infty$ and $x_0\to\infty$. Since $\lamWo(t)\sim\log(t)$ and $\binom{t+d}{d} \sim \sfrac{t^{d}}{d!}$ for $t\to\infty$ we then find
\begin{equation*}
\lim_{\varepsilon_0\to 0^+} K_{d,\sigma,\varepsilon_0} 
= \lim_{\varepsilon_0\to 0^+} \binom{x_0 + d}{d}\cdot x_0^{-d} \cdot\Bigl(\frac{2\log(y_0)}{\lamWo(\frac{y_0}{e\sigma^2})}\Bigr)^d
= \frac{2^d}{d!}
\eqspace.
\end{equation*}
This sharpens the result of \cite[Remark~4]{K2011} by a factor of approximately $\sqrt{2\pi d}$. 
Finally note that for $\sigma=1$ and $\varepsilon_0 \coloneqq 4\exp(-e^2)\approx 0.0025$ we have $y_0=e^2$ and $x_0 = 2 e^2$.
Hence 
we find
\begin{equation*}
K_{d,1,\varepsilon_0}\leq 16\cdot d^{2 e^2}\cdot \bigl(\sfrac{2}{e^2}\bigr)^d
\eqspace.
\end{equation*}
In this respect we like to mention that \cite{Zh2002} established the constant $4^d (6 d + 2)$ for $0<\varepsilon\leq \exp(-90 d^2 - 11 d - 3)$, 
again however, for a $\log^{d+1}(\sfrac{1}{\varepsilon})$-type bound on $X = [0,1]^d$. To compare this result of \cite{Zh2002} with our Theorem~\ref{thm:results:entropy-2} we use $[0,1]^d\subseteq \sfrac{1}{2} + \sfrac{\sqrt{d}}{2}\cdot B_2^d$ in combination with the later established Theorem~\ref{thm:results:decomposition} and \eqref{eq:results:volume} for $r = \sfrac{\sqrt{d}}{2}$ and $\sigma=1$ as well as Lemma~\ref{lem:entropy:range_const} for $C\coloneqq\sfrac{1}{\sqrt{360 e}}$ to obtain a constant not exceeding 
\begin{equation*}
(2\pi)^{-\sfrac{1}{2}}\cdot 16.84^d\cdot d^{-\sfrac{d}{2}-\sfrac{1}{2}}
\end{equation*}
for the range $0<\varepsilon\leq \varepsilon_0 \coloneqq 4\exp\bigl(- 3 d \sqrt{10 e} \log\bigl(3 d \sqrt{\sfrac{10}{e}}\bigr)\bigr)$ on $X=[0,1]^d$. This improves the result from \cite{Zh2002} in both, the $\varepsilon$ range and the constant for all $d\geq 1$.

For some applications, see e.g.\ \cite{VaZa2009,FaSt2018}, it is sufficient and more convenient to work with a weaker 
bound in $\varepsilon$ such as the one in \eqref{eq:intro:example_upper_bound}. 
For this reason, the following theorem establishes an upper bound of the form 
\eqref{eq:intro:example_upper_bound} with an explicit constant.

\begin{thm}\label{thm:results:poly}
For $d\geq 1$ and $\sigma>0$ we define 
\begin{equation*}
t_0 
\coloneqq \frac{2 (d+1)\cdot 4^{\frac{p}{d+1}}}{e p \cdot \lamWo(\frac{d+1}{p \sigma^2})}\exp\Bigl(\frac{1}{\lamWo(\frac{d+1}{p \sigma^2})}\Bigr)
\eqspace.
\end{equation*}
Then for all $d\geq 1$, $\sigma>0$, $p>0$, and all $0<\varepsilon\leq 1$ we have
\begin{equation*}
\coveringl[big]{\varepsilon}{\Id: H_\sigma(B_2^d)\to\ell_\infty(B_2^d)} 
\leq 
\binom{t_0 + d}{d} \cdot \frac{d+1}{e p}\cdot 4^{\frac{p}{d+1}}\cdot \varepsilon^{-p}
\eqspace.
\end{equation*}
\end{thm}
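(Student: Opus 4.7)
The plan is to derive Theorem~\ref{thm:results:poly} by revisiting the intermediate estimate produced in the course of proving Theorem~\ref{thm:results:entropy-1}, and then re-optimizing its free truncation parameter so as to produce a polynomial-in-$\varepsilon^{-1}$ bound rather than a poly-log one. The point is that the exponent $p$ in the target bound acts as a budget that allows us to ``trade'' some of the tail-decay needed in Theorem~\ref{thm:results:entropy-1} for a small polynomial blow-up, reducing the number of approximants that need to be counted.

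Concretely, the proof of Theorem~\ref{thm:results:entropy-1} (and equally that of Theorem~\ref{thm:results:entropy-2}) passes through an inequality of the form
\[
\coveringl[big]{\varepsilon}{\Id: H_\sigma(B_2^d)\to\ell_\infty(B_2^d)} \leq \binom{t+d}{d}\cdot h(t,\sigma,\varepsilon),
\]
valid for every real $t>0$ subject to an auxiliary tail-smallness condition linking $t$ to $\varepsilon$ via $\sigma$. In Theorem~\ref{thm:results:entropy-1} the parameter $t$ is chosen as a function of $\varepsilon$ which makes the tail exactly of order $\varepsilon$, producing the $\log^{d+1}(4/\varepsilon)/\log\log^d(4/\varepsilon)$ factor. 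For Theorem~\ref{thm:results:poly} I would instead fix $t=t_0$ as a single $\varepsilon$-independent quantity, chosen so that $h(t_0,\sigma,\varepsilon)\leq C(d,\sigma,p)\,\varepsilon^{-p}$ uniformly in $\varepsilon\in(0,1]$. The cost of doing so is that $\binom{t+d}{d}$ is no longer minimized separately at every scale, hence the appearance of an explicit constant prefactor.

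To turn the remaining $\log$-factor into a power of $\varepsilon^{-1}$, I would use the elementary inequality $\log u \leq u^{\alpha}/(e\alpha)$, valid for all $u,\alpha>0$ (check by minimizing at $u=e^{1/\alpha}$), applied with $u=4/\varepsilon$ and $\alpha=p/(d+1)$, which gives
\[
\log(4/\varepsilon) \leq \frac{d+1}{ep}\cdot 4^{p/(d+1)}\cdot \varepsilon^{-p/(d+1)},\qquad \varepsilon\in(0,1].
\]
The $(d+1)/(ep)\cdot 4^{p/(d+1)}$ prefactor in the statement comes directly from this inequality; raising it to a suitable power and absorbing the tail condition into the choice of $t_0$ is what produces the full constant. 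The tail condition itself, once rewritten in terms of $p$ rather than $\varepsilon$, becomes a transcendental equation of the shape $t\log(ct/\sigma^2)=\text{const}(d,p)$ whose unique positive solution is expressible in closed form via the principal branch $\lamWo$, and matches precisely the $t_0$ stated in the theorem.

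The main obstacle is the careful bookkeeping required to extract the intermediate bound from the proof of Theorem~\ref{thm:results:entropy-1} with the $t$-dependence left fully explicit, and then to algebraically verify that solving the resulting transcendental equation yields exactly the Lambert-$W$ expression for $t_0$ with no hidden constants. A secondary obstacle, of a routine but non-trivial nature, is to check that $t_0$ satisfies the prerequisites of the intermediate inequality for every $p>0$ and $\sigma>0$, so that the conclusion holds on the entire range $\varepsilon\in(0,1]$ rather than only for small $\varepsilon$.
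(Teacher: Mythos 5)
Your plan has the right ingredients in places, but the central mechanism you describe would not produce the stated bound, and in fact cannot produce any finite bound uniformly over $\varepsilon\in(0,1]$.

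The paper's intermediate inequality (its Lemma~\ref{lem:entropy:general}, built on Lemma~\ref{lem:entropy:kuehn}) has the form
\begin{equation*}
\coveringl[big]{\varepsilon}{I_\sigma}\leq\binom{(\efii{\sigma}\circ\log)(\sfrac 4\varepsilon)+d}{d}\cdot\log(\sfrac 4\varepsilon),
\end{equation*}
and the crucial point is that the argument of the binomial coefficient is forced to grow without bound as $\varepsilon\to 0$: the truncation level $N$ in Lemma~\ref{lem:entropy:kuehn} must be taken large enough that the tail $\sqrt{(2\sigma^2)^N/N!}$ falls below $\varepsilon$. Your plan ``fix $t=t_0$ as a single $\varepsilon$-independent quantity and absorb the tail condition into the choice of $t_0$'' is therefore not possible on the range $\varepsilon\in(0,1]$: for any fixed $t_0$ the tail condition fails for $\varepsilon$ small enough, and no amount of rearranging $h(t_0,\sigma,\varepsilon)$ can repair that, because the left-hand side genuinely diverges as $\varepsilon\to 0$ while the right-hand side would then be a fixed binomial coefficient times a quantity you would still have to control. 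The paper resolves this via Lemma~\ref{lem:entropy:binom_bound}, which does \emph{not} fix the binomial argument but rather factors out its growth: if $t_0\leq f(\varepsilon)$ and the binomial argument is at most $f(\varepsilon)$, then $\binom{f(\varepsilon)+d}{d}\leq\binom{t_0+d}{d}\cdot(f(\varepsilon)/t_0)^d$. Choosing $f(\varepsilon)=(\sfrac 4\varepsilon)^{q_1}\sup_{t>0}g_1(t)$ with $g_1(t)=2te^{-q_1 t}/\lamWo(t/(e\sigma^2))$ makes $(f(\varepsilon)/t_0)^d=(\sfrac 4\varepsilon)^{q_1 d}/4^{q_1 d}$, and it is precisely this factor of order $\varepsilon^{-q_1 d}=\varepsilon^{-pd/(d+1)}$ that your plan is missing.

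A related symptom appears in your exponent accounting: the elementary inequality $\log u\leq u^\alpha/(e\alpha)$ with $u=\sfrac 4\varepsilon$ and $\alpha=p/(d+1)$ is indeed exactly the bound the paper uses for the second factor $\log(\sfrac 4\varepsilon)$, but it only supplies a factor $\varepsilon^{-p/(d+1)}$, not $\varepsilon^{-p}$. You then suggest ``raising it to a suitable power'', which would give $[(d+1)/(ep)]^{d+1}4^{p}\varepsilon^{-p}$, not $\frac{d+1}{ep}\cdot 4^{p/(d+1)}\cdot\varepsilon^{-p}$; the discrepancy cannot be absorbed into $t_0$ because, as you correctly predict, $t_0$ comes out matching the stated Lambert-$W$ expression, leaving no slack. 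In the paper the $\varepsilon^{-p}$ is assembled as $\varepsilon^{-q_1 d}\cdot\varepsilon^{-q_2}$ with $q_1=q_2=p/(d+1)$, with the first piece coming from the binomial growth and only the second from the log-factor; the $4^{p/(d+1)}$ is the residue $4^{q_2}\cdot 4^{q_1 d}/4^{q_1 d}=4^{q_2}$ after the normalization by $t_0=4^{q_1}g_1(t^*)$. Finally, $t_0$ does not arise from a tail-smallness transcendental equation: it is $4^{q_1}$ times the global maximum of $g_1$, and the Lambert $W$ enters because the critical point of $g_1$ is found in closed form via $\lamWo$. To repair the proposal, replace ``fix the binomial argument'' with ``bound the binomial argument by $(\sfrac 4\varepsilon)^{q_1}\sup g_1$ and factor out the growth via Lemma~\ref{lem:entropy:binom_bound}''; the remainder of your argument then goes through.
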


To better understand the constant appearing in 
Theorem~\ref{thm:results:poly}, we denote it by 
\begin{equation}\label{eq:results:poly_const-1}
K_{d,\sigma,p}
\coloneqq \binom{t_0 + d}{d} \cdot \frac{d+1}{e p }\cdot 4^{\frac{p}{d+1}}
\eqspace.
\end{equation}
For fixed $\sigma,p>0$, Lemma~\ref{lem:entropy:poly_const_d} then shows that $d\mapsto K_{d,\sigma,p}$ grows 
more slowly than any exponential function, i.e. for all $a>0$ we have $K_{d,\sigma,p}e^{-a d} \to 0$ for $d\to\infty$. To be more precise, Lemma~\ref{lem:entropy:poly_const_d} provides constants $c_{\sigma,p}>0$ and 
$C_{\sigma,p}>0$ independent of $d$ such that 
\begin{equation*}
K_{d,\sigma,p}
\leq C_{\sigma,p} \sqrt{d \log(d)}\cdot \exp\Bigl(c_{\sigma,p}\cdot d\cdot \frac{\log\log(d)}{\log(d)}\Bigr)
\eqspace, \qquad \qquad d\geq 1.
\end{equation*}
Moreover, if we restrict our considerations to $\sigma\coloneqq 1$ and also fix a $d\geq 1$ and a $0<p_0\leq \sfrac{1}{e}$,
then Lemma~\ref{lem:entropy:poly_const_p} shows that 
\begin{equation*}
K_{d,1,p}
\leq \sfrac{1}{2}\cdot C_0^d \cdot\sqrt{d}\cdot \frac{(\sfrac{1}{p})^{d+1}}{\log^d(\sfrac{1}{p})}
\eqspace, \qquad \qquad 0<p\leq p_0,
\end{equation*}
where the constant $C_0$ is given by 
\begin{equation}\label{eq:results:poly_const-2}
C_0 \coloneqq ep_0\cdot\log(\sfrac{1}{p_0}) + (2 + \sfrac{1}{e}) 2^{1+p_0}\exp\Bigl(\frac{1}{\lamWo(\sfrac{2}{p_0})}\Bigr)
\eqspace.
\end{equation}
In particular, $C_0$ only depends on $p_0$, and for $p_0 \coloneqq \sfrac{1}{e}$ we find $C_0 \approx 13.6481$. In addition,
$C_0$ converges to $4+\sfrac{2}{e}\approx 4.7358$ for $p_0\to 0$.
Finally, we note that the constant appearing in Theorem~\ref{thm:results:poly} can again be substantially improved if we restrict our consideration to a smaller range $0<\varepsilon\leq\varepsilon_0$. 

%-------------------------------------------------------
\subsection{Anisotropic Gaussian Kernels}\label{sec:anisotropic}
%-------------------------------------------------------

The goal of this subsection is to analyze how the constants in the log-covering number bounds 
depend on the kernel width $\sigma$ and the size of the input space $X$. To this end, 
our next theorem reduces the problem of bounding the log-covering numbers of 
$\Id:H_{\vecsigma}(X)\to\ell_\infty(X)$ of \emph{anisotropic} Gaussian RKHS 
to the estimation of the log-covering numbers of the embedding $\Id:H_1(B_2^d)\to\ell_\infty(B_2^d)$ of the \emph{isotropic} Gaussian RKHS with width $\sigma=1$.

\begin{thm}\label{thm:results:decomposition}
For all bounded subsets $X\subseteq\R^d$, all $\vecsigma=(\sigma_1,\ldots,\sigma_d)\in(0,\infty)^d$, and all 
 $0<\varepsilon\leq 1$ we have
\begin{equation*}
\coveringl[big]{\varepsilon}{\Id: H_\vecsigma(X)\to\ell_\infty(X)}
\leq \covering{1}{D_\vecsigma X}\cdot \coveringl[big]{\varepsilon}{\Id: H_1(B_2^d)\to\ell_\infty(B_2^d)}
\eqspace, 
\end{equation*}
where the covering numbers $\covering{1}{D_\vecsigma X}$ of $D_\vecsigma X\subseteq\R^d$ are with respect to the Euclidean norm.
\end{thm}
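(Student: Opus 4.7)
The plan is to use two isometric reductions, followed by a partition-of-domain gluing argument. First I would eliminate the anisotropy via a linear change of variables, then eliminate the width $\sigma=1$ normalization of the domain via a covering of $D_\vecsigma X$ by unit balls and translation invariance of the Gaussian kernel.

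\textbf{Step 1: Reduction to the isotropic case with $\sigma=1$.} Because $k_\vecsigma(x,x') = k_1(D_\vecsigma x, D_\vecsigma x')$, the pullback map $\Phi : H_1(D_\vecsigma X) \to H_\vecsigma(X)$ defined by $\Phi(\tilde f) \coloneqq \tilde f \circ D_\vecsigma$ is an isometric isomorphism of RKHSs (this is the standard change-of-feature-map identity for RKHSs, see e.g.\ \cite[Chapter 4]{StCh2008}). Since $\Phi$ preserves sup-norms on the respective domains, we obtain
\begin{equation*}
\coveringl[big]{\varepsilon}{\Id: H_\vecsigma(X)\to\ell_\infty(X)} = \coveringl[big]{\varepsilon}{\Id: H_1(Y)\to\ell_\infty(Y)},
\end{equation*}
where $Y \coloneqq D_\vecsigma X$.

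\textbf{Step 2: Cover $Y$ by unit balls and use translation invariance.} Let $N \coloneqq \covering{1}{Y}$ and pick centers $y_1,\ldots,y_N \in \R^d$ with $Y \subseteq \bigcup_{i=1}^N (y_i + B_2^d)$. For each $i$, the translation map $\tau_i : g \mapsto g(\cdot + y_i)$ is an isometric isomorphism $H_1(y_i + B_2^d) \to H_1(B_2^d)$, because the isotropic Gaussian kernel with width $1$ is translation invariant. Moreover, since $Y \cap (y_i + B_2^d) \subseteq y_i + B_2^d$, the restriction $H_1(Y) \to H_1(Y \cap (y_i + B_2^d))$ has norm at most one (restriction is always a metric surjection of RKHSs via minimal-norm extensions), and the same holds for restriction to any further subset of $y_i + B_2^d$. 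Combining these, the restriction map $H_1(Y) \to \ell_\infty(Y \cap (y_i + B_2^d))$ factors (isometrically on the relevant $\ell_\infty$ side) through the canonical embedding $H_1(B_2^d) \to \ell_\infty(B_2^d)$ via $\tau_i^{-1}$ and further restriction.

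\textbf{Step 3: Gluing.} Set $M \coloneqq \coveringl{\varepsilon}{\Id: H_1(B_2^d)\to\ell_\infty(B_2^d)}$ and fix, for each $i$, an $\varepsilon$-net $C_i \subseteq \ell_\infty(y_i + B_2^d)$ of cardinality at most $e^M$ that $\varepsilon$-covers the image of $\Id(B_{H_1(y_i + B_2^d)})$ in $\ell_\infty(y_i + B_2^d)$, obtained by pulling back an optimal net via $\tau_i$. Given any $\tilde f \in B_{H_1(Y)}$, Step~2 yields for each $i$ an element $g_i \in C_i$ with $\|g_i - \tilde f\|_{\ell_\infty(Y \cap (y_i + B_2^d))} \leq \varepsilon$. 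Define $G \in \ell_\infty(Y)$ by $G(x) \coloneqq g_{i(x)}(x)$, where $i(x)$ is the smallest index such that $x \in y_{i(x)} + B_2^d$. Then $\|\tilde f - G\|_{\ell_\infty(Y)} \leq \varepsilon$, and $G$ depends only on the tuple $(g_1,\ldots,g_N)$, of which there are at most $(e^M)^N$ distinct choices. Taking logarithms and combining with Step~1 yields the claim.

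\textbf{Expected main obstacle.} The only non-routine point is bookkeeping the various isometries between RKHSs on translated and rescaled domains, together with the fact that the constructed $G$ need only lie in $\ell_\infty(Y)$ (not in any RKHS) for the log-covering number estimate to apply. Once these standard RKHS facts are set up carefully, the gluing is straightforward and produces the product bound $N \cdot M$ in the exponent.
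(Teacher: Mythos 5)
Your proposal is correct and mirrors the paper's own argument step for step: reduce to the isotropic width-one case via the change of variables $D_\vecsigma$ (the paper's Lemma~\ref{lem:decomposition:sets}), cover $D_\vecsigma X$ by unit balls and use translation invariance to reduce each piece to $B_2^d$ (Equation~\eqref{eq:decomposition:tansl_invar}), and glue the local $\varepsilon$-nets into a global one to obtain the product bound (Lemma~\ref{lem:decomposition:partition} iterated inside Lemma~\ref{lem:decomposition:general}). The paper merely packages these three steps into standalone lemmas stated for general bounded translation-invariant kernels on Banach spaces, whereas you carry them out directly for the Gaussian case; the content is the same.
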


Before we proceed we like to remark that Theorem \ref{thm:results:decomposition} 
actually holds for general bounded and translation invariant kernels, see Section~\ref{sec:decomposition} for details.

Now, to illustrate the impact of Theorem \ref{thm:results:decomposition} we note that 
$X$ is assumed to be bounded, and hence there are an $x\in\R^d$ and an $r>0$ with $X\subseteq x + r B_2^d$. 
In the case of $\min_i\sigma_i\geq \sfrac{1}{r}$, Lemma~\ref{lem:decomposition:volume} 
then gives us
\begin{equation}\label{eq:results:volume}
\covering{1}{D_\vecsigma X}
\leq \covering{\sfrac{1}{r}}{D_\vecsigma B_2^d}
\leq \sigma_1\cdot\ldots\cdot\sigma_d \cdot(3r)^d
\eqspace.
\end{equation}
For the sake of completeness, we further mention that in the case of $\max_i\sigma_i\leq\sfrac{1}{r}$ we have $\covering{1}{D_\vecsigma X} = 1$.
Now, we can combine Theorem~\ref{thm:results:decomposition} with one of the theorems presented in Section~\ref{sec:isotropic}. For example, by combining Theorem~\ref{thm:results:decomposition} with Theorem~\ref{thm:results:entropy-1} and \eqref{eq:results:volume} we obtain
\begin{equation}\label{eq:results:final}
\coveringl[big]{\varepsilon}{\Id: H_\vecsigma(r B_2^d)\to\ell_\infty(r B_2^d)} 
\leq \binom{4 e + d}{d} \cdot (\sfrac{3 r}{e})^d\cdot \sigma_1\cdot\ldots\cdot\sigma_d\cdot \frac{\log^{d+1}(\sfrac{4}{\varepsilon})}{\log\log^{d}(\sfrac{4}{\varepsilon})}
\end{equation}
for all $0<\varepsilon\leq 1$, all $r>0$, and all $\vecsigma=(\sigma_1,\ldots,\sigma_d)\in [\sfrac{1}{r},\infty)^d$. 
Finally, we mention that in the case of $r\geq 1$ and an isotropic Gaussian kernel with width $\sigma\geq 1$ the constant in \eqref{eq:results:final}, that is
\begin{equation}\label{kd1-bound-2}
\tilde{K}_{d,\sigma,r}\coloneqq K_{d,1}\cdot (3 r \sigma)^d = \binom{4 e + d}{d} \cdot (\sfrac{3}{e})^d\cdot r^d\cdot \sigma^d
\eqspace,
\end{equation}
grows like $\sigma^d$ for $\sigma\to\infty$. In contrast, recall from \eqref{eq:results:sigma}
that the constant $K_{d,\sigma}$ obtained in Theorem~\ref{thm:results:entropy-1} grows like $\sigma^{2d}$.
Consequently, 
\eqref{eq:results:final} improves Theorem~\ref{thm:results:entropy-1} in the dependency on $\sigma$ by a factor of $2$ in the exponent. 
In this respect note that \cite{VaZa2009} obtained the same behavior in $\sigma$ but for a bound that 
does \emph{not} include the double logarithmic factor $\log\log^{d}(\sfrac{4}{\varepsilon})$ in \eqref{eq:results:final}.
Moreover, \cite[Theorem 6.27]{StCh2008} achieves the same behavior in $\sigma$ for a polynomial bound
of the form \eqref{eq:intro:example_upper_bound}. Of course, the latter two results can be recovered 
from \eqref{eq:results:final}, and in addition, the results in \cite{StCh2008,VaZa2009} do not take care of the explicit form of 
the constants.

\section{Proofs}\label{sec:proofs}

% !TeX spellcheck = en_US

Before we present the proofs of our results, we briefly recall some basic facts about covering numbers. To this end, let $S,T:U\to V$ and $R:V\to W$ be some bounded operators between Banach spaces. Then the covering numbers satisfy
\begin{equation}\label{eq:covering:add_mult}
\covering{\varepsilon+\|T\|}{S+T} \leq \covering{\varepsilon}{S}
\qquad\text{and}\qquad
\covering{\varepsilon\|R\|}{RS}\leq \covering{\varepsilon}{S}
%\eqspace,
\end{equation}
for all $\varepsilon>0$. Furthermore, if $T$ has a finite rank, then the covering numbers satisfy the following standard bound
\begin{equation}\label{eq:covering:finite_rank}
\covering{\varepsilon}{T}\leq \biggl(1+\frac{2\|T\|}{\varepsilon}\biggr)^{\rank T}
\end{equation}
For the proofs of these properties and a comprehensive introduction to this topic we refer to \cite[Section~1.3]{CaSt1990}, where we note that in \cite{CaSt1990} the proofs are for entropy numbers but they easily transfer to covering numbers.

% !TeX spellcheck = en_US
%-------------------------------------------------------

\subsection{Isotropic Gaussian Kernels}

%-------------------------------------------------------

Throughout this subsection 
the domain $X\coloneqq B_2^d\subseteq\R^d$ is fixed, and hence we simply write $I_\sigma$
for the embedding $\Id:H_\sigma(B_2^d)\to\ell_\infty(B_2^d)$.
Before we prove the 
results of Subsection \ref{sec:isotropic} we present several auxiliary lemmas. 
Our first result in this direction, which essentially repeats the key argument of 
\cite[Theorem~3]{K2011} on the input space $X=B_2^d$ instead of
$X=[0,1]^d$, provides a general estimate for the log-covering numbers of $I_\sigma$.

\begin{lem}\label{lem:entropy:kuehn}
For all $\sigma>0$, $\varepsilon>0$,
and all integers $N\geq 1$ we have
\begin{equation*}
\coveringl[bigg]{\varepsilon + \sqrt{\frac{(2\sigma^2)^N}{N!}}}{I_\sigma} 
\leq \binom{N-1+d}{d}\cdot \log\bigl(1+\sfrac{2}{\varepsilon}\bigr)
\eqspace.
\end{equation*}
\end{lem}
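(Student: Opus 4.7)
The plan is to adapt the Taylor-series splitting argument of \cite{K2011} to the input domain $X = B_2^d$. The starting point is the expansion
\[
k_\sigma(x,x') = e^{-\sigma^2\|x\|_{\ell_2^d}^2 - \sigma^2\|x'\|_{\ell_2^d}^2}\sum_{n=0}^{\infty}\frac{(2\sigma^2)^n}{n!}\langle x,x'\rangle^n,
\]
which, after applying the multinomial theorem to $\langle x,x'\rangle^n$, exhibits the canonical feature map $\Phi:B_2^d\to\ell_2(\N_0^d)$ with components $\Phi_\alpha(x) = \sqrt{(2\sigma^2)^{|\alpha|}/\alpha!}\, e^{-\sigma^2\|x\|_{\ell_2^d}^2}\, x^\alpha$. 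The operator $T_\sigma:\ell_2(\N_0^d)\to\ell_\infty(B_2^d)$ defined by $T_\sigma w := \langle w,\Phi(\cdot)\rangle$ satisfies $T_\sigma B_{\ell_2}\supseteq B_{H_\sigma(B_2^d)}$ by the standard Taylor-feature characterization of the Gaussian RKHS, which gives the reduction $\covering{\varepsilon}{I_\sigma}\leq \covering{\varepsilon}{T_\sigma}$ for every $\varepsilon > 0$.

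I then split $T_\sigma = T_\sigma^{<N} + T_\sigma^{\geq N}$ along the partition of $\N_0^d$ into $\{|\alpha|<N\}$ and $\{|\alpha|\geq N\}$. The low-degree part has rank at most $\binom{N-1+d}{d}$, the number of multi-indices with $|\alpha|\leq N-1$, and its operator norm is bounded by $\sup_{x\in B_2^d}\|\Phi(x)\|_{\ell_2} = \sup_{x\in B_2^d}\sqrt{k_\sigma(x,x)} = 1$. The finite-rank estimate \eqref{eq:covering:finite_rank} therefore yields
\[
\covering{\varepsilon}{T_\sigma^{<N}} \leq \bigl(1 + 2/\varepsilon\bigr)^{\binom{N-1+d}{d}}.
\]

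The main technical step is to bound $\|T_\sigma^{\geq N}\|$. Using the multinomial identity $\sum_{|\alpha|=n}(n!/\alpha!)\, x^{2\alpha} = \|x\|_{\ell_2^d}^{2n}$ and setting $t := 2\sigma^2\|x\|_{\ell_2^d}^2\leq 2\sigma^2$, I obtain
\[
\|\Phi^{\geq N}(x)\|_{\ell_2}^2 = e^{-t}\sum_{n\geq N}\frac{t^n}{n!}.
\]
Since $(N+k)! \geq N!\, k!$, the exponential tail satisfies $\sum_{n\geq N} t^n/n! \leq (t^N/N!)\, e^t$, so the factor $e^{-t}$ cancels and I conclude $\|T_\sigma^{\geq N}\|^2 \leq (2\sigma^2)^N/N!$.

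Finally, applying the additive rule from \eqref{eq:covering:add_mult} to $T_\sigma = T_\sigma^{<N} + T_\sigma^{\geq N}$ produces $\covering{\varepsilon + \|T_\sigma^{\geq N}\|}{T_\sigma} \leq \covering{\varepsilon}{T_\sigma^{<N}}$; combining this with the two preceding estimates, monotonicity of covering numbers in the radius, and taking logarithms gives the claim. The main subtlety is the inclusion $B_{H_\sigma(B_2^d)}\subseteq T_\sigma B_{\ell_2}$ needed to pass from $T_\sigma$ back to $I_\sigma$; this is standard for the Gaussian kernel but must be invoked explicitly.
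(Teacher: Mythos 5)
Your argument is essentially the same as the paper's: both split the embedding along degree $N$ into a finite-rank piece of rank $\binom{N-1+d}{d}$ and a tail of norm at most $\sqrt{(2\sigma^2)^N/N!}$, then apply the additive rule \eqref{eq:covering:add_mult} and the finite-rank bound \eqref{eq:covering:finite_rank}. The only cosmetic difference is that the paper works with the orthonormal basis $(e_k)_{k\in\N_0^d}$ of $H_\sigma(B_2^d)$ from \cite[Theorem~4.42]{StCh2008} and orthogonal projections $P_N,Q_N$, citing \cite{K2011} for the tail-norm estimate, whereas you factor through the feature map into $\ell_2(\N_0^d)$ via the metric surjection and derive the tail bound yourself from $(N+k)!\geq N!\,k!$; this is the same decomposition expressed through the isometry $H_\sigma(B_2^d)\cong\ell_2(\N_0^d)$, and your derivation of the tail estimate is correct and a bit more self-contained.
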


\begin{proof}
For fixed $\sigma>0$, $\varepsilon>0$, and $N\geq 1$ we define
\begin{equation*}
\varepsilon_0 
\coloneqq \sqrt{\frac{(2\sigma^2)^N}{N!}}
\eqspace.
\end{equation*} 
In order to repeat the argument of \cite[Theorem~3]{K2011},
we begin by recalling 
some notation: For every multi-index $k=(k_1,\ldots, k_d)\in\N_0^d$ we define the function $e_{k}:B_2^d\to\R$ by
\begin{equation*}
e_{k}(x)\coloneqq \sqrt{\frac{(2\sigma^2)^{|k|}}{k!}} x^k \exp\bigl(- \sigma^2\|x\|_{\ell_2^d}^2\bigr)
\end{equation*}
where we use $|k|\coloneqq k_1+\ldots+k_d$, $k!=k_1!\cdot\ldots\cdot k_d!$, and $x^k\coloneqq x_1^{k_1}\cdot\ldots\cdot x_d^{k_d}$ for $x=(x_1,\ldots,x_d)\in B_2^d$. Since $B_2^d$ has a non-empty interior the family of functions $(e_{k})_{k\in\N_0^d}$ forms an orthonormal basis (ONB) of $H_{\sigma}(B_2^d)$ according to \cite[Theorem~4.42]{StCh2008}. Using this ONB we now consider, for $N\geq 1$, 
the orthogonal projections $P_N,Q_N:H_{\sigma}(B_2^d)\to H_{\sigma}(B_2^d)$ 
onto $\overline{\spanning}\{e_{k}:\ |k|< N\}$ and $\overline{\spanning}\{e_{k}:\ |k|\geq N\}$, respectively. From the first equation on page~494 of \cite{K2011} we know
\begin{equation*}
\|I_\sigma\circ Q_N\| 
\leq\sup_{x\in B_2^d} \sqrt{\frac{(2\sigma^2\|x\|_{\ell_2^d}^2)^N}{N!}} 
= \sqrt{\frac{(2\sigma^2)^N}{N!}}
= \varepsilon_0
\eqspace.
\end{equation*}
As a consequence of \eqref{eq:covering:add_mult}, \eqref{eq:covering:finite_rank}, and $\|I_\sigma\circ P_N\|=1$ we get 
\begin{equation*}
\coveringl[big]{\varepsilon+\varepsilon_0}{I_\sigma}
= \coveringl[Big]{\varepsilon+\varepsilon_0}{I_\sigma\circ P_N + I_\sigma\circ Q_N}
\leq \coveringl[big]{\varepsilon}{I_\sigma\circ P_N}
\leq \rank(P_N)\log\bigl(1 + \sfrac{2}{\varepsilon}\bigr)
\eqspace.
\end{equation*}
Together with the formula 
\begin{equation*}
\rank(P_N)=\binom{N-1+d}{d}
\eqspace,
\end{equation*}
which was derived in \cite[Remark~4]{K2011},
we thus obtain the assertion.
\end{proof}

Our next goal is to find suitable values of $N\geq 1$ 
for the bound in Lemma~\ref{lem:entropy:kuehn}.
To this end,
recall that Lambert's $W$-function is 
increasing and satisfies the relations $W(x)>0$ for $x>0$, 
$\lamWo(x)e^{\lamWo(x)} = x$ for $x\geq -\sfrac{1}{e}$, and $\lamWo(y e^y) = y$ for $y\geq -1$.
In the following, we will often use these relations 
without referencing them. 

\begin{lem}\label{lem:entropy:ef}
For all $\sigma >0$, $x>0$, $y\geq -\sigma^2$,
\begin{align*}
\ef{\sigma}(x) 
\coloneqq 2\Bigl(\frac{2 e \sigma^2}{x}\Bigr)^{\sfrac{x}{2}}
\eqspace,\qquad\text{and}\qquad
\efii{\sigma}(y)
\coloneqq 2 e \sigma^2\exp\Bigl(\lamWo\Bigl(\frac{y}{e \sigma^2}\Bigr)\Bigr) 
\end{align*} 
the following statements are true:
\begin{enumerate}
\item\label{it:entropy:ef:i} The function $\ef{\sigma}:(0,\infty)\to (0,\infty)$ is 
decreasing on $(2 \sigma^2,\infty)$ and $\lim_{x\to \infty}\ef{\sigma}(x)=0$.

\item\label{it:entropy:ef:ii} The function $\efii{\sigma}:[-\sigma^2, \infty)\to [2\sigma^2,\infty)$ is
increasing and we have 
\begin{equation}\label{eq:entropy:efii}
\efii{\sigma}(y)
= \frac{2y}{\lamWo\bigl(\frac{y}{e \sigma^2}\bigr)}
\eqspace.
\end{equation}

\item\label{it:entropy:ef:iii} The function $\ef{\sigma}:[2 \sigma^2,\infty)\to (0,2\exp(\sigma^2)]$ is bijective with inverse $\efi{\sigma}$ given by
\begin{equation*}
\efi{\sigma}(\varepsilon)
= \efii{\sigma}\circ\log(\sfrac{2}{\varepsilon}) 
\eqspace.
\end{equation*}
\end{enumerate}
\end{lem}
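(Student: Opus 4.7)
The plan is to prove the three parts essentially independently, with only part \ref{it:entropy:ef:iii} requiring a nontrivial manipulation involving Lambert's $W$-function.

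For \ref{it:entropy:ef:i}, I would write $\ef{\sigma}(x) = 2\exp(g(x))$ with $g(x) \coloneqq (x/2)\log(2e\sigma^2/x)$ and compute $g'(x) = (1/2)\log(2\sigma^2/x)$ via the product rule. This is negative precisely when $x > 2\sigma^2$, giving the monotonicity claim. For the limit, the dominant term of $g$ at infinity is $-(x/2)\log x$, so $g(x)\to -\infty$ and hence $\ef{\sigma}(x)\to 0$.

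For \ref{it:entropy:ef:ii}, the identity \eqref{eq:entropy:efii} follows from the defining relation $\lamWo(z)\,e^{\lamWo(z)} = z$, which for $z\neq 0$ rearranges to $e^{\lamWo(z)} = z/\lamWo(z)$; applying this with $z = y/(e\sigma^2)$ inside the definition of $\efii{\sigma}(y)$ gives the formula at once, and the isolated case $y=0$ is checked separately using $\lamWo(0)=0$. Monotonicity then follows because $\lamWo$ is itself increasing on $[-1/e,\infty)$ and the exponential is increasing. The stated range is determined by the boundary evaluations $\efii{\sigma}(-\sigma^2) = 2e\sigma^2\cdot e^{-1} = 2\sigma^2$ (using $\lamWo(-1/e) = -1$) together with $\efii{\sigma}(y) \to \infty$ as $y\to\infty$.

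For \ref{it:entropy:ef:iii}, bijectivity is inherited directly from part \ref{it:entropy:ef:i}: $\ef{\sigma}$ is continuous, strictly decreasing on $[2\sigma^2,\infty)$, and satisfies $\ef{\sigma}(2\sigma^2) = 2\exp(\sigma^2)$ while tending to $0$ at infinity. To identify the inverse, I would start from $\varepsilon = \ef{\sigma}(x)$, take logarithms to reach
\begin{equation*}
\log(2/\varepsilon) = (x/2)\log(x/(2e\sigma^2)),
\end{equation*}
and substitute $v \coloneqq \log(x/(2e\sigma^2))$, so that $x = 2e\sigma^2 e^v$ and the equation reduces to $v e^v = \log(2/\varepsilon)/(e\sigma^2)$. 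By definition of Lambert's $W$-function this gives $v = \lamWo(\log(2/\varepsilon)/(e\sigma^2))$, and substituting back yields $\efi{\sigma}(\varepsilon) = 2e\sigma^2\exp(\lamWo(\log(2/\varepsilon)/(e\sigma^2))) = \efii{\sigma}(\log(2/\varepsilon))$. The main bookkeeping task, and the only subtle point I anticipate, is the branch choice: $x \geq 2\sigma^2$ forces $v\geq -1$, which is precisely the range on which the upper branch $\lamWo$ is the correct inverse of $v\mapsto v e^v$; on the $\varepsilon$ side, the restriction $\varepsilon\leq 2\exp(\sigma^2)$ guarantees $\log(2/\varepsilon)/(e\sigma^2)\geq -1/e$ so that the argument indeed lies in the domain of $\lamWo$. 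No deeper obstacle is expected.
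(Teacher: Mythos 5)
Your proof is correct and follows essentially the same route as the paper: parts \ref{it:entropy:ef:i} and \ref{it:entropy:ef:ii} are identical in substance, and for part \ref{it:entropy:ef:iii} your forward derivation via the substitution $v=\log(x/(2e\sigma^2))$ and the relation $ve^v=\log(2/\varepsilon)/(e\sigma^2)$ is just the "solve-for-$x$" version of the paper's "verify $\ef{\sigma}\circ\efii{\sigma}(\log(\sfrac{2}{\varepsilon}))=\varepsilon$" computation, resting on the same use of $\lamWo(z)e^{\lamWo(z)}=z$. Your explicit attention to the branch choice ($x\geq 2\sigma^2 \Leftrightarrow v\geq -1$) makes a point the paper leaves implicit in its appeal to part \ref{it:entropy:ef:i}.
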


\begin{proof}
\ada{\ref{it:entropy:ef:i}} 
Some tedious calculations show that the derivative of $\ef{\sigma}$
is given by
\begin{equation*}
\ef{\sigma}'(x) = \frac{\ef{\sigma}(x)}{2} \log\Bigl(\frac{2 \sigma^2}{x}\Bigr)
\eqspace, \qquad \qquad x>0.
\end{equation*}
From this identity the first assertion immediately follows. The second assertion is obvious.

\ada{\ref{it:entropy:ef:ii}} The monotonicity of $\efii{\sigma}$ is a consequence of the monotonicity of $\lamWo$ and the definition of the function $\efii{\sigma}$. Moreover, 
\eqref{eq:entropy:efii} follows from the identity
 $\lamWo(x)\exp(\lamWo(x)) = x$. 

\ada{\ref{it:entropy:ef:iii}} By part~\ref{it:entropy:ef:i} we already know that $\ef{\sigma}:[2 \sigma^2,\infty)\to (0,2\exp(\sigma^2)]$ is bijective. 
To verify the formula for $\efi{\sigma}$, we fix some 
 $0<\varepsilon\leq 2 \exp(\sigma^2)$ and write 
$y\coloneqq\log(\sfrac{2}{\varepsilon})$. This immediately gives $y\geq -\sigma^2$
 and by the definition of 
 $\efii{\sigma}$ we find
\begin{equation*}
\ef{\sigma}\circ\efii{\sigma}(y) 
= 2\Bigl(\frac{2 e \sigma^2}{\efii{\sigma}(y)}\Bigr)^{\sfrac{\efii{\sigma}(y)}{2}}
= 2\exp\Bigl(-\lamWo\Bigl(\frac{y}{e \sigma^2}\Bigr)\cdot \frac{\efii{\sigma}(y)}{2}\Bigr)
= 2 e^{-y} = \varepsilon
\eqspace,
\end{equation*}
i.e.\ we have shown the assertion.
\end{proof}

In the next lemma we choose a suitable parameter $N\geq 1$ for the bound in Lemma~\ref{lem:entropy:kuehn} with the help of the functions introduced in Lemma~\ref{lem:entropy:ef}.

\begin{lem}\label{lem:entropy:general}
For all $\sigma>0$ and all $0<\varepsilon\leq 1$, we have
\begin{equation}\label{eq:entropy:general}
\coveringl[big]{\varepsilon}{I_\sigma} 
\leq \binom{(\efii{\sigma}\circ\log)(\sfrac{4}{\varepsilon}) + d}{d}\cdot \log\bigl(\sfrac{4}{\varepsilon}\bigr)
\eqspace.
\end{equation}
\end{lem}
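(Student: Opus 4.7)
The plan is to apply Lemma~\ref{lem:entropy:kuehn} with an integer $N$ chosen via the function $\efii{\sigma}$ and an internal radius $\varepsilon'$ chosen so that (i) the tail term $\sqrt{(2\sigma^2)^N/N!}$ plus $\varepsilon'$ stays below $\varepsilon$, and (ii) the logarithmic factor in the resulting bound simplifies exactly to $\log(\sfrac{4}{\varepsilon})$.

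Concretely, I would set $y\coloneqq\log(\sfrac{4}{\varepsilon})$, which is positive since $\varepsilon\leq 1<4$, and take $N\coloneqq\lceil\efii{\sigma}(y)\rceil$. Because $\efii{\sigma}$ maps into $[2\sigma^2,\infty)$ by Lemma~\ref{lem:entropy:ef}~\ref{it:entropy:ef:ii}, we have $N\geq 2\sigma^2>0$, so $N\geq 1$ and $N$ lies in the decreasing branch of $\ef{\sigma}$. Rewriting $y=\log(2/(\varepsilon/2))$ and invoking Lemma~\ref{lem:entropy:ef}~\ref{it:entropy:ef:iii} gives $\efii{\sigma}(y)=\efi{\sigma}(\varepsilon/2)$, so the monotonicity of $\ef{\sigma}$ on $[2\sigma^2,\infty)$ yields $\ef{\sigma}(N)\leq\ef{\sigma}(\efii{\sigma}(y))=\varepsilon/2$. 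Combined with the elementary bound $N!\geq(N/e)^N$ (a consequence of $e^N\geq N^N/N!$), this gives
\[
\sqrt{\frac{(2\sigma^2)^N}{N!}}\;\leq\;\biggl(\frac{2e\sigma^2}{N}\biggr)^{\!N/2}\;=\;\frac{\ef{\sigma}(N)}{2}\;\leq\;\frac{\varepsilon}{4}.
\]

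To obtain exactly $\log(\sfrac{4}{\varepsilon})$ as the logarithmic factor, I would put $\varepsilon'\coloneqq 2\varepsilon/(4-\varepsilon)$. A direct computation shows $2/\varepsilon' = 4/\varepsilon-1$, hence $\log(1+2/\varepsilon')=\log(\sfrac{4}{\varepsilon})=y$, and since $\varepsilon\leq 1$ we have $4-\varepsilon\geq 3$, i.e.\ $\varepsilon'\leq 2\varepsilon/3$. Therefore $\varepsilon'+\sqrt{(2\sigma^2)^N/N!}\leq 2\varepsilon/3+\varepsilon/4=11\varepsilon/12\leq\varepsilon$. Feeding $\varepsilon'$ and $N$ into Lemma~\ref{lem:entropy:kuehn} and using that log-covering numbers are non-increasing in the radius then yields
\[
\coveringl[big]{\varepsilon}{I_\sigma}\;\leq\;\coveringl[bigg]{\varepsilon'+\sqrt{\frac{(2\sigma^2)^N}{N!}}}{I_\sigma}\;\leq\;\binom{N-1+d}{d}\cdot\log(\sfrac{4}{\varepsilon}).
\]
Finally, from $N\leq\efii{\sigma}(y)+1$ and the monotonicity of $t\mapsto\binom{t+d}{d}$ noted after \eqref{eq:results:binom}, the binomial coefficient can be enlarged to $\binom{\efii{\sigma}(\log(\sfrac{4}{\varepsilon}))+d}{d}$, which is \eqref{eq:entropy:general}.

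The only delicate step is the joint calibration of $\varepsilon'$ and $N$: one must make $\log(1+2/\varepsilon')$ exactly equal to $\log(\sfrac{4}{\varepsilon})$ so that no stray constants pollute the final logarithmic factor, while simultaneously keeping $\varepsilon'+\sqrt{(2\sigma^2)^N/N!}\leq\varepsilon$ so that the monotonicity step from Lemma~\ref{lem:entropy:kuehn} is valid. The combination $\varepsilon'=2\varepsilon/(4-\varepsilon)$ together with the Stirling-based bound $\sqrt{(2\sigma^2)^N/N!}\leq\varepsilon/4$ just fits both constraints in the regime $\varepsilon\leq 1$; beyond this, no routine step poses a real obstacle.
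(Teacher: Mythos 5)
Your proof is correct and follows essentially the same route as the paper: both pick $N$ as the smallest integer at least $\efii{\sigma}(\log(4/\varepsilon))$, bound the tail $\sqrt{(2\sigma^2)^N/N!}$ via a Stirling-type estimate and the identity $\ef{\sigma}\circ\efi{\sigma}=\mathrm{id}$, and feed a suitably shrunk radius into Lemma~\ref{lem:entropy:kuehn}. The only cosmetic differences are that you calibrate $\varepsilon'=2\varepsilon/(4-\varepsilon)$ so $\log(1+2/\varepsilon')=\log(4/\varepsilon)$ holds exactly (the paper instead uses $2\varepsilon/3$ and the crude bound $\log(1+3/\varepsilon)\leq\log(4/\varepsilon)$), and you use the weaker inequality $N!\geq(N/e)^N$ in place of full Stirling, which still suffices.
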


\begin{proof}
For a fixed $0<\varepsilon\leq 1$ we write $y\coloneqq\log(\sfrac{4}{\varepsilon})$ and $x\coloneqq\efii{\sigma}(y)$. Since $y>1$ we have $x>2\sigma^2$, and hence there is a unique integer $N\geq 1$ with $N-1<x\leq N$. Using Lemma~\ref{lem:entropy:kuehn} with $\sfrac{2\varepsilon}{3}$ instead of $\varepsilon$, the monotonicity of $t\mapsto\binom{t+d}{d}$, and $1\leq \sfrac{1}{\varepsilon}$ we find
\begin{equation*}
\coveringl[bigg]{\frac{2\varepsilon}{3} + \sqrt{\frac{(2\sigma^2)^N}{N!}}}{I_\sigma} 
\leq \binom{N-1+d}{d}\cdot \log\bigl(1+\sfrac{3}{\varepsilon}\bigr)
\leq \binom{x+d}{d}\cdot \log\bigl(\sfrac{4}{\varepsilon}\bigr)
\eqspace.
\end{equation*}
Consequently, it remains to show that $\sqrt{\sfrac{(2\sigma^2)^N}{N!}} \leq \sfrac{\varepsilon}{3}$ holds true. 
To this end, we use Stirling's formula $N!\geq \sqrt{2\pi N} (\sfrac{N}{e})^N$ to get
\begin{equation*}
\sqrt{\frac{(2 \sigma^2)^N}{N!}}
\leq \frac{1}{(2\pi N)^{\sfrac{1}{4}}}\cdot\Bigl(\frac{2 e \sigma^2}{N}\Bigr)^{\sfrac{N}{2}}
\leq \frac{\ef{\sigma}(N)}{2(2\pi)^{\sfrac{1}{4}}} 
\eqspace.
\end{equation*}
Moreover, the already observed $x>2\sigma^2$ together with parts~\ref{it:entropy:ef:i} and \ref{it:entropy:ef:iii} of Lemma~\ref{lem:entropy:ef} yields
\begin{equation*}
\ef{\sigma}(N) \leq \ef{\sigma}(x) 
= \ef{\sigma}\bigl( \efii{\sigma}(y) \bigr)
= \ef{\sigma}\bigl( \efii{\sigma}\circ\log(\sfrac{4}{\varepsilon}) \bigr)
= \sfrac{\varepsilon}{2}
\eqspace.
\end{equation*}
Combining both estimates and $(2\pi)^{-\sfrac{1}{4}}\leq \sfrac{4}{3}$ we get the assertion.
\end{proof}

Note that by an easy adaption of the above proof we can replace the $4$ in $y=\log(\sfrac{4}{\varepsilon})$ by $\gamma=\sfrac{7}{2}$ if we choose $\sfrac{4\varepsilon}{5}$ instead of $\sfrac{2\varepsilon}{3}$ and use the bound $(2\pi)^{-\sfrac{1}{4}}\approx 0.6316 \leq \sfrac{7}{10}$. Moreover, some tedious calculations show that the argument still works for
\begin{equation*}
\gamma 
\coloneqq \frac{3(2\pi)^{\sfrac{1}{4}} + 1 + \sqrt{9 (2\pi)^{\sfrac{1}{2}} + 2 (2\pi)^{\sfrac{1}{4}} + 1}}{2 (2\pi)^{\sfrac{1}{4}}}
\approx 3.4485
\eqspace.
\end{equation*}
%Furthermore, if we restrict our considerations to $N\geq N_0$ for some $N_0\geq 1$ we can further decrease the constant $\gamma$. 
Since these improvements have little impact we stick to $\gamma=4$ for convenience.
The following lemma demonstrates the general technique we use to bound the right hand side of \eqref{eq:entropy:general}.

\begin{lem}\label{lem:entropy:binom_bound}
Let $d\geq 1$, $\sigma>0$, $0<\varepsilon_0\leq 1$, and $t_0>0$. If $f:(0,\varepsilon_0]\to (0,\infty)$ is a function with $f(\varepsilon)\geq t_0$ and
\begin{equation*}
(\efii{\sigma}\circ\log)(\sfrac{4}{\varepsilon}) \leq f(\varepsilon)
\end{equation*}
for all $0<\varepsilon\leq\varepsilon_0$, then we have
\begin{equation*}
\binom{(\efii{\sigma}\circ\log)(\sfrac{4}{\varepsilon})+d}{d} \leq \binom{t_0 + d}{d}\cdot \Bigl(\frac{f(\varepsilon)}{t_0}\Bigr)^d
\eqspace.
\end{equation*}
\end{lem}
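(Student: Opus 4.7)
The plan is to reduce the statement to the two monotonicity properties of the generalized binomial coefficient stated at the beginning of Subsection~\ref{sec:isotropic} (and formally proved in Lemma~\ref{lem:entropy:binom_behavior}), namely that $t \mapsto \binom{t+d}{d}$ is increasing on $(0,\infty)$ while $t \mapsto \binom{t+d}{d}\cdot t^{-d}$ is decreasing on $(0,\infty)$. Set $u(\varepsilon) \coloneqq (\efii{\sigma}\circ \log)(\sfrac{4}{\varepsilon})$ for brevity, so the hypotheses read $t_0 \leq u(\varepsilon) \leq f(\varepsilon)$ for every $0 < \varepsilon \leq \varepsilon_0$.

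First, I would apply the increasing behavior of $t \mapsto \binom{t+d}{d}$ to the inequality $u(\varepsilon)\leq f(\varepsilon)$ in order to replace the awkward argument $u(\varepsilon)$ by $f(\varepsilon)$:
\begin{equation*}
\binom{u(\varepsilon)+d}{d} \leq \binom{f(\varepsilon)+d}{d}.
\end{equation*}

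Next, I would exploit the decreasing behavior of $t \mapsto t^{-d}\binom{t+d}{d}$, evaluated at the two points $t_0 \leq f(\varepsilon)$, which yields
\begin{equation*}
\frac{1}{f(\varepsilon)^d}\binom{f(\varepsilon)+d}{d} \leq \frac{1}{t_0^d}\binom{t_0+d}{d}
\quad\Longleftrightarrow\quad
\binom{f(\varepsilon)+d}{d} \leq \binom{t_0+d}{d}\cdot\Bigl(\frac{f(\varepsilon)}{t_0}\Bigr)^d.
\end{equation*}
Chaining the two displayed inequalities gives precisely the asserted bound.

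There is essentially no obstacle beyond invoking the two monotonicity facts at the correct points; the only thing to take care of is that both are applied with strictly positive arguments, which is guaranteed by $t_0 > 0$ and $f(\varepsilon) \geq t_0 > 0$. The hypothesis $f(\varepsilon) \geq t_0$ is what makes the factor $(f(\varepsilon)/t_0)^d$ at least $1$ and allows the decreasing-monotonicity step to go in the needed direction, while the hypothesis $u(\varepsilon) \leq f(\varepsilon)$ is what allows the increasing-monotonicity step; the restriction $0 < \varepsilon_0 \leq 1$ plays no role in the argument itself and is inherited merely so the domain of $u$ is well defined.
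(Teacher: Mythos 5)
Your proof is correct and takes essentially the same route as the paper: both arguments combine the fact that $t\mapsto\binom{t+d}{d}$ is increasing (applied with $u(\varepsilon)\leq f(\varepsilon)$) and the fact that $t\mapsto t^{-d}\binom{t+d}{d}$ is decreasing (applied with $t_0\leq f(\varepsilon)$), the paper merely packaging the latter in the auxiliary function $G_d(t)\coloneqq t^{-d}\binom{t+d}{d}$. One small slip in your preamble: the hypotheses give $t_0\leq f(\varepsilon)$ and $u(\varepsilon)\leq f(\varepsilon)$ but \emph{not} $t_0\leq u(\varepsilon)$, which is in general false here; fortunately your argument never invokes that extra inequality, so the proof is unaffected.
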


\begin{proof}
In order to prove this statement we use the auxiliary function $G_d(t):(0,\infty)\to(0,\infty)$ defined by
$G_d(t)
\coloneqq\binom{t+d}{d}\cdot t^{-d} 
= \frac{1}{d!}\prod_{i=1}^d\bigl(1 + \frac{i}{t}\bigr)$.
Since $t \mapsto\binom{t+d}{d}$ is increasing and $G_d$ is decreasing we get
\begin{equation*}
\binom{(\efii{\sigma}\circ\log)(\sfrac{4}{\varepsilon}) + d}{d}
\leq \binom{f(\varepsilon)+d}{d}
= G_d(f(\varepsilon))\cdot f^d(\varepsilon)
\leq G_d(t_0)\cdot f^d(\varepsilon)
\end{equation*}
for all $0<\varepsilon\leq\varepsilon_0$, which gives the assertion.
\end{proof}

As final preparation we need the following simple lemma.

\begin{lem}\label{lem:entropy:lambW}
For $\sigma>0$ consider $t^* \coloneqq \sigma^{-2} \exp(\sigma^{-2})$ and $\qs:(0,\infty)\to \R$ defined by 
\begin{equation*}
\qs(t) \coloneqq \frac{1+ \log(\sigma^2)+ \log(t)} {\lamWo(t)}
\eqspace.
\end{equation*}
Then $\qs$ is increasing on $(0,t^\ast]$ and decreasing on $[t^\ast,\infty)$. Moreover, $\qs$ has a unique global maximum at $t^\ast$ with $\qs(t^\ast) = 1+\sigma^2$ and we have $\lim_{t\to \infty}\qs(t) = 1$.
\end{lem}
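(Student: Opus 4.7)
The plan is to analyze $q_\sigma$ via its derivative, using the standard identity $\lamWo(t)\exp(\lamWo(t))=t$ and the resulting differentiation formula $\lamWo'(t)=\lamWo(t)/\bigl(t(1+\lamWo(t))\bigr)$ valid for $t>0$.

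First I would compute $q_\sigma'$ directly. Writing the numerator and denominator of $q_\sigma$ separately, differentiating, and using the formula for $\lamWo'$, the expression should simplify to
\begin{equation*}
q_\sigma'(t) \;=\; \frac{\lamWo(t)-\log(\sigma^2 t)}{t\,\lamWo(t)\bigl(1+\lamWo(t)\bigr)}
\eqspace.
\end{equation*}
Since $\lamWo(t)>0$ for $t>0$, the denominator is strictly positive, so the sign of $q_\sigma'$ is the sign of $g(t)\coloneqq \lamWo(t)-\log(\sigma^2 t)$.

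Next I would show that $g$ has a unique zero at $t^\ast$ and is strictly decreasing. For the zero, solving $\lamWo(t)=\log(\sigma^2 t)$ via the defining identity of $\lamWo$ gives $\log(\sigma^2 t)\cdot \sigma^2 t = t$, hence $\sigma^2 t=\exp(\sigma^{-2})$, i.e.\ $t=t^\ast$. For monotonicity, direct differentiation yields
\begin{equation*}
g'(t) \;=\; \frac{\lamWo(t)}{t(1+\lamWo(t))}-\frac{1}{t} \;=\; \frac{-1}{t(1+\lamWo(t))} \;<\;0
\eqspace,
\end{equation*}
so $g$ is strictly decreasing on $(0,\infty)$, $g>0$ on $(0,t^\ast)$ and $g<0$ on $(t^\ast,\infty)$. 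This yields the claimed monotonicity of $q_\sigma$ and the fact that $t^\ast$ is the unique global maximum.

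Finally, the value at the maximum follows by plugging in: at $t=t^\ast$ we have $\lamWo(t^\ast)=\sigma^{-2}$ (verified via $\sigma^{-2}\exp(\sigma^{-2})=t^\ast$) and $\log(\sigma^2 t^\ast)=\sigma^{-2}$, so $q_\sigma(t^\ast)=(1+\sigma^{-2})/\sigma^{-2}=1+\sigma^2$. The limit $q_\sigma(t)\to 1$ as $t\to\infty$ follows from $\lamWo(t)\sim\log(t)$, since then $\log(t)/\lamWo(t)\to 1$ while $(1+\log(\sigma^2))/\lamWo(t)\to 0$. There is no real obstacle here; the only care needed is in correctly applying the chain rule through $\lamWo'$ and in verifying that the simplification of $q_\sigma'$ really produces $\lamWo(t)-\log(\sigma^2 t)$ in the numerator.
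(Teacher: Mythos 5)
Your proof is correct and takes essentially the same approach as the paper: you compute the same expression for $q_\sigma'$ and reduce the monotonicity claim to the sign of the numerator $\lamWo(t)-\log(\sigma^2 t)$. The only small difference is cosmetic — you establish the sign change by differentiating $g(t)=\lamWo(t)-\log(\sigma^2 t)$ and solving $g=0$ exactly, while the paper rearranges the inequality $\lamWo(t)<\log(t\sigma^2)$ algebraically via $\lamWo(t)e^{\lamWo(t)}=t$, and you use the asymptotics $\lamWo(t)\sim\log(t)$ for the limit, while the paper substitutes $s=te^t$; both are equally valid.
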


\begin{proof} 
A simple but tedious calculation shows
\begin{equation*}
\qs'(t) = \frac{\lamWo(t) - \log (t\sigma^2)}{t \cdot \lamWo(t) \cdot (1+ \lamWo(t))}
\eqspace.
\end{equation*}
Since the denominator is positive for all $t>0$ we can focus on the numerator in order to investigate the monotonicity properties of $\qs$. Consequently, $\qs$ is decreasing, if and only if $W(t) < \log(t\sigma^2)$ and this is equivalent to
\begin{equation*}
t 
= W(t) e^{W(t)} 
< \log(t\sigma^2)\cdot \exp\circ\log(t\sigma^2)
= t\sigma^2 \log(t\sigma^2)
\eqspace.
\end{equation*}
Rearranging this inequality for $t$ shows that $\qs$ is decreasing on $[t^\ast,\infty)$. Analogously, we get that $\qs$ is increasing on $(0,t^\ast]$ and that $\qs$ has a unique global maximum at $t^\ast$.
Since $\lamWo(t^\ast) = \sigma^{-2}$ and $\log(t^\ast) = -\log(\sigma^2) + \sigma^{-2}$ we find $\qs(t^\ast) = 1+\sigma^2$.
Finally, for $t\geq 1$ we write $s\coloneqq t e^t$ and from 
\begin{equation*}
\qs(s) 
= \frac{1+ \log(\sigma^2)+ \log(t e^t)} {\lamWo(t e^t)}
= \frac{1+ \log(\sigma^2)+ \log(t) + t} {t}
\eqspace,
\end{equation*}
the assertion $\lim_{t\to\infty}\qs(t)=1$ easily follows.
\end{proof}

\begin{proof}[Proof of Theorem~\ref{thm:results:entropy-1}]
Let us define $\varepsilon_0 \coloneqq 1$ and $y_0 \coloneqq \log(\sfrac 4 {\varepsilon_0})$. 
For $0<\varepsilon\leq \varepsilon_0$ we further write 
$y\coloneqq\log(\sfrac{4}{\varepsilon})\geq y_0>1$. 
An application of 
Lemma~\ref{lem:entropy:lambW} then yields
\begin{equation*}
(\efii{\sigma}\circ \log)(\sfrac{4}{\varepsilon})
=\frac{2y}{\lamWo(\frac{y}{e\sigma^2})}
= \frac{2y}{\log(y)}\cdot \frac{1+\log(\sigma^2) + \log(\frac{y}{e\sigma^2})}{\lamWo(\frac{y}{e\sigma^2})}
\leq 2\,(1+\sigma^2)\cdot\frac{y}{\log(y)}
\eqqcolon f(\varepsilon)
\end{equation*}
for all $0<\varepsilon\leq\varepsilon_0$.
Now, the derivative of 
$\beta:(1,\infty)\to (0,\infty)$ defined by 
$\beta(t) \coloneqq \frac t{\log (t)}$ is 
\begin{equation}\label{eq:entropy:entropy:temp}
\beta'(t) 
= \frac{\log(t)-1}{\log^2(t)}
\eqspace,
\end{equation}
and consequently
it is easy to check that, for $t^\ast \coloneqq e$, the function $\beta$ is decreasing on $(1,t^\ast]$, increasing on $[t^\ast,\infty)$, and has a unique global minimum at $t^*$ with $\beta(t^\ast) = e$. As a result, we get $f(\varepsilon)\geq 2 e (1+\sigma^2) \eqqcolon t_0$. 
Finally, combining Lemma~\ref{lem:entropy:general} and Lemma~\ref{lem:entropy:binom_bound} gives the assertion.
\end{proof}

\begin{proof}[Proof of Theorem~\ref{thm:results:entropy-2}]
For a fixed $0<\varepsilon_0 \leq 4\exp(-e^{1 + \sigma^{-2}})$
we recall the definitions of $y_0 \coloneqq \log(\sfrac{4}{\varepsilon_0})$
and $x_0 \coloneqq \efii{\sigma}(y_0)$.
Moreover, for 
$0<\varepsilon\leq \varepsilon_0$ we write $y\coloneqq \log(\sfrac{4}{\varepsilon}) \geq y_0$. 
Note that
the restriction on $\varepsilon_0$ ensures $y_0 \geq \exp(1+\sigma^{-2})$ and hence $\frac{y_0}{e\sigma^2} \geq \sigma^{-2}\exp(\sigma^{-2})$. As a consequence, the function $y\mapsto\sfrac{\log(y)}{\lamWo(\frac{y}{e\sigma^2})}$ is decreasing on $[y_0,\infty)$ according to Lemma~\ref{lem:entropy:lambW} and we get
\begin{equation*}
(\efii{\sigma}\circ\log)(\sfrac{4}{\varepsilon})
=\frac{2\log(y)}{\lamWo(\frac{y}{e \sigma^2})}\cdot\frac{y}{\log(y)}
\leq
\frac{2\log(y_0)}{\lamWo(\frac{y_0}{e\sigma^2})}
\cdot 
\frac{y}{\log (y)} 
\eqqcolon f(\varepsilon)
\eqspace.
\end{equation*}
Now, from \eqref{eq:entropy:entropy:temp} 
we know that the function $\beta(t)=\frac{t}{\log(t)}$ is increasing on $[e,\infty)$, and hence
\begin{equation*}
f(\varepsilon)
\geq \frac{2\log(y_0)}{\lamWo(\frac{y_0}{e\sigma^2})}
\cdot 
\frac{y_0}{\log (y_0)} 
=\frac{2y_0}{\lamWo(\frac{y_0}{e\sigma^2})} 
= x_0 
\eqqcolon t_0
\end{equation*}
for all $0<\varepsilon\leq\varepsilon_0$. 
Finally, combining Lemma~\ref{lem:entropy:general} and Lemma~\ref{lem:entropy:binom_bound} gives the assertion.
\end{proof}

\begin{proof}[Proof of Theorem~\ref{thm:results:poly}]
For $0<\varepsilon\leq 1$ we again write $y\coloneqq\log(\sfrac{4}{\varepsilon})\geq \log(4)$.
In order to give a polynomial upper bound for $\coveringl{\varepsilon}{I_\sigma}$ we use Lemma~\ref{lem:entropy:general} and estimate the two factors, 
$\binom{\efii{\sigma}(y) + d}{d}$
and $\log(\sfrac{4}{\varepsilon})$,
appearing in
\eqref{eq:entropy:general} separately by a polynomial bound. 
To bound the first factor we fix a $q_1>0$ and define the function 
\begin{equation*}
g_1(t)\coloneqq 2 \frac{t e^{-q_1 t}}{\lamWo(\frac{t}{e\sigma^2})}
\eqspace, \qquad\qquad t>0.
\end{equation*}
Using $e^{-q_1 y} = (\sfrac{4}{\varepsilon})^{-q_1}$ we then get
\begin{equation*}
(\efii{\sigma}\circ\log)(\sfrac{4}{\varepsilon})
=\frac{2y}{\lamWo(\frac{y}{e\sigma^2})} \Bigl(\frac{4}{\varepsilon}\Bigr)^{-q_1} \cdot \Bigl(\frac{4}{\varepsilon}\Bigr)^{q_1} 
\leq \Bigl(\frac{4}{\varepsilon}\Bigr)^{q_1} \sup_{t>0} g_1(t)
\eqqcolon f(\varepsilon)
\end{equation*}
and $f(\varepsilon)\geq 4^{q_1}\cdot \sup_{t>0} g_1(t) \eqqcolon t_0$. 
A simple but tedious calculation shows
\begin{equation*}
g_1'(t) = \frac{g_1(t)}{1+\lamWo(\frac{t}{e\sigma^2})}\biggl(\sigma^{-2}\exp\Bigl(-\Bigl(1 + \lamWo\Bigl(\frac{t}{e\sigma^2}\Bigr)\Bigr)\Bigr) - q_1\Bigl(1 + \lamWo\Bigl(\frac{t}{e\sigma^2}\Bigr)\Bigr)\biggr)
\eqspace.
\end{equation*}
If we define
\begin{equation*}
t^\ast\coloneqq\frac{1}{q_1}\Bigl(1 - \frac{1}{\lamWo(\frac{1}{q_1\sigma^2})}\Bigr)
\end{equation*}
then another tedious calculation shows that $g_1$ is increasing on $(0,t^\ast]$, decreasing on $[t^\ast,\infty)$, and has a unique global maximum at $t^\ast$. In order to evaluate the maximum $g_1(t^\ast)$ we first give another representation of $t^\ast$ using $\sfrac{t}{\lamWo(t)} = \exp(\lamWo(t))$ for $t=\frac{1}{q_1\sigma^2}$
\begin{equation*}
t^\ast 
= \Bigl(\lamWo\Bigl(\frac{1}{q_1\sigma^2}\Bigr) - 1\Bigr)\cdot\frac{ \sfrac{1}{q_1}}{\lamWo(\frac{1}{q_1\sigma^2})}
= \sigma^2\cdot \Bigl(\lamWo\Bigl(\frac{1}{q_1\sigma^2}\Bigr) - 1\Bigr) \cdot\exp\circ\lamWo\Bigl(\frac{1}{q_1\sigma^2}\Bigr)
\eqspace.
\end{equation*}
Using this representation together with 
$\lamWo(xe^x)=x$
for $x=\lamWo(\frac{1}{q_1\sigma^2}) - 1$ gives us 
\begin{equation*}
\frac{t^\ast}{\lamWo(\frac{t^\ast}{e\sigma^2}) }
= \frac{t^\ast}{\lamWo (\frac{1}{q_1\sigma^2} ) - 1}
= \frac{1}{q_1\cdot\lamWo(\frac{1}{q_1\sigma^2})}
\eqspace.
\end{equation*}
Using this identity we directly get
\begin{equation*}
t_0 
= 4^{q_1} g_1(t^\ast) 
= 2\cdot 4^{q_1}\cdot \frac{e^{-q_1 t^\ast}}{q_1\cdot\lamWo(\frac{1}{q_1\sigma^2})}
= \frac{2\cdot 4^{q_1}}{e q_1\cdot \lamWo(\frac{1}{q_1\sigma^2})}\exp\Bigl( \sfrac{1}{\lamWo\Bigl(\frac{1}{q_1\sigma^2}\Bigr)}\Bigr)
\end{equation*}
and Lemma~\ref{lem:entropy:binom_bound} gives us
\begin{equation}\label{eq:entropy:poly:temp-1}
\binom{(\efii{\sigma}\circ\log)(\sfrac{4}{\varepsilon})+d}{d} 
\leq \binom{t_0 + d}{d}\cdot \Bigl(\frac{f(\varepsilon)}{t_0}\Bigr)^d
= \binom{t_0 + d}{d}\cdot 4^{-q_1 d}\cdot (\sfrac{4}{\varepsilon})^{q_1 d}
\eqspace.
\end{equation}

Now, we estimate the second factor $y=\log(\sfrac{4}{\varepsilon})$ by a polynomial bound of order $q_2>0$. To this end, we define the function $g_2(t)\coloneqq t e^{-q_2 t}$, for $t>0$, and estimate
\begin{equation*}
y 
= (\sfrac{4}{\varepsilon})^{q_2}\cdot y\cdot (\sfrac{4}{\varepsilon})^{-q_2} 
\leq (\sfrac{4}{\varepsilon})^{q_2} \cdot\sup_{t>0} g_2(t)
\eqspace.
\end{equation*}
An easy calculation shows that the derivative of $g_2$ is given by 
$g_2'(t) = g_2(t) \cdot(\sfrac{1}{t} - q_2)$
and consequently $g_2$ has a global maximum at $t^\ast \coloneqq \sfrac{1}{q_2}$ with $g_2(t^\ast) = \frac{1}{e q_2}$. Therefore, we get 
\begin{equation}\label{eq:entropy:poly:temp-2}
y 
\leq \frac{(\sfrac{4}{\varepsilon})^{q_2}}{e q_2}
\eqspace.
\end{equation}
Finally, combining Lemma~\ref{lem:entropy:general} with \eqref{eq:entropy:poly:temp-1} and \eqref{eq:entropy:poly:temp-2} yields
\begin{equation*}
\coveringl{\varepsilon}{I_\sigma}
\leq \binom{t_0 + d}{d}\cdot \frac{1}{e q_2\cdot 4^{q_1 d}}\cdot (\sfrac{4}{\varepsilon})^{q_1 d + q_2} 
\eqspace,
\end{equation*}
and for $q_1=q_2=\frac{p}{d+1}$ we get the assertion.
\end{proof}

%-------------------------------------------------------

\subsection{Auxiliary Results}

%-------------------------------------------------------

In this section we collect additional results that are helpful to understand the quantities appearing in the bounds presented in Section~\ref{sec:isotropic}.

\begin{lem}\label{lem:entropy:binom_behavior}
For an integer $d\geq 1$ and a real number $t>0$ the (generalized) binomial coefficient from \eqref{eq:results:binom} satisfies
\begin{equation*}
\binom{t+d}{d} 
= \frac{\Gamma(t+d+1)}{\Gamma(t+1)\Gamma(d+1)}
\eqspace.
\end{equation*}
Moreover, for a fixed real number $t>0$ the sequence
\begin{equation*}
a_d\coloneqq\binom{t + d}{d}\cdot d^{-t}
\eqspace, \qquad \qquad d\geq 1,
\end{equation*}
is decreasing and converges to $\sfrac{1}{\Gamma(t+1)}$.
\end{lem}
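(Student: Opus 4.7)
The plan is to treat the two claims of the lemma independently. For the Gamma identity, I would start from the definition $\binom{t+d}{d} = \frac{1}{d!}\prod_{i=1}^d(t+i)$ given in \eqref{eq:results:binom} and apply the functional equation $\Gamma(z+1)=z\Gamma(z)$ iteratively to write
\begin{equation*}
\Gamma(t+d+1) = (t+d)(t+d-1)\cdots(t+1)\,\Gamma(t+1) = \Gamma(t+1)\prod_{i=1}^d (t+i)\eqspace.
\end{equation*}
Combining this with $\Gamma(d+1)=d!$ immediately gives the stated representation.

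For the limit $a_d \to 1/\Gamma(t+1)$ as $d\to\infty$, I would rewrite $a_d$ using the identity from the first part, obtaining $a_d = \frac{1}{\Gamma(t+1)}\cdot \frac{\Gamma(d+t+1)}{\Gamma(d+1)\,d^t}$, and then invoke the standard asymptotic $\Gamma(d+t+1)/\Gamma(d+1) \sim d^t$ for $d\to\infty$, which is a direct consequence of Stirling's formula. This leaves only the monotonicity claim to verify.

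To show that $a_d$ is decreasing, I would form the ratio and use $\binom{t+d+1}{d+1} = \binom{t+d}{d}\cdot \frac{t+d+1}{d+1}$ to obtain
\begin{equation*}
\frac{a_{d+1}}{a_d}
= \frac{t+d+1}{d+1}\cdot\Bigl(\frac{d}{d+1}\Bigr)^t
= \Bigl(1+\frac{t}{d+1}\Bigr)\cdot\Bigl(1+\frac{1}{d}\Bigr)^{-t}\eqspace,
\end{equation*}
so the task reduces to showing $\log\bigl(1+\frac{t}{d+1}\bigr)\leq t\log\bigl(1+\frac{1}{d}\bigr)$ for all $t>0$ and all integers $d\geq 1$. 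I would sandwich both sides against $\frac{t}{d+1}$: the elementary inequality $\log(1+x)\leq x$ for $x>0$ applied at $x=\frac{t}{d+1}$ gives the upper bound on the left, and the integral estimate
\begin{equation*}
\log\Bigl(1+\frac{1}{d}\Bigr) = \int_d^{d+1}\frac{\mathrm{d}x}{x} \geq \frac{1}{d+1}
\end{equation*}
(using that $1/x$ is decreasing on $[d,d+1]$) yields the lower bound on the right after multiplying by $t$. Chaining both inequalities completes the argument.

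The main obstacle I anticipate is the monotonicity for small $t$, specifically $0<t<1$, because Bernoulli's inequality $(1+1/d)^t\geq 1+t/d$ reverses in this regime and cannot be used. The sandwich against $\frac{t}{d+1}$ circumvents this issue and works uniformly for all $t>0$, which is why I would use that route rather than attempting a direct comparison of the two sides.
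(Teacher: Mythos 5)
Your proof is correct, and for the Gamma identity and the limit $a_d\to 1/\Gamma(t+1)$ it is identical to the paper's. The monotonicity argument is where you diverge, and your route is cleaner. Both proofs reduce the claim to the same inequality $1+\frac{t}{d+1}\leq\bigl(1+\frac{1}{d}\bigr)^t$, but the paper then defines $f_d(t)=\bigl(1+\frac{1}{d}\bigr)^t-\bigl(1+\frac{t}{d+1}\bigr)$, checks $f_d(0)=0$, verifies $f_d'(0)\geq 0$ via $\log(1+x)\geq\frac{x}{1+x}$, and invokes $f_d''>0$ to push $f_d'>0$ and hence $f_d>0$ on $(0,\infty)$. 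You instead pass to logarithms and sandwich: $\log\bigl(1+\frac{t}{d+1}\bigr)\leq\frac{t}{d+1}\leq t\log\bigl(1+\frac{1}{d}\bigr)$. Note that the second inequality in your chain, $\frac{1}{d+1}\leq\log\bigl(1+\frac{1}{d}\bigr)$, is exactly the inequality $\log(1+x)\geq\frac{x}{1+x}$ at $x=1/d$ that the paper uses to establish $f_d'(0)\geq 0$, so the key elementary fact is shared. What your version buys is the elimination of the derivative computations and the convexity argument: instead of showing the exponential dominates at $t=0^+$ and keeps dominating by convexity, you observe directly that both sides of the reduced inequality are bounded by the linear term $t/(d+1)$ from the right sides. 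Your comment that Bernoulli's inequality reverses for $0<t<1$ is accurate and explains why a naive comparison of $(1+1/d)^t$ with $1+t/d$ fails in precisely the regime of interest; the sandwich works uniformly in $t>0$, as does the paper's calculus argument, so both are valid fixes for the same pitfall.
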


\begin{proof}
First note that $\Gamma(d+1)=d!$ and an $d$-times application of $\Gamma(t+1) = t\cdot \Gamma(t)$ gives us
\begin{equation*}
\binom{t+d}{d}
= \frac{1}{d!}\prod_{i=1}^d (t + i)
= \frac{\Gamma(t+1)\prod_{i=1}^d (t + i)}{\Gamma(d+1)\Gamma(t+1)} 
= \frac{\Gamma(d+t+1)}{\Gamma(d+1)\Gamma(t+1)}
\eqspace.
\end{equation*}
Using $\frac{\Gamma(d+t+1)}{\Gamma(d+1)d^t} \to 1$ for $d\to\infty$, which is a well-known property of the Gamma function,
%\todo{add}{reference, Hildebrandt p.~35 (deutsch) besser englische Referenz} 
we get
\begin{equation*}
a_d
= \frac{1}{\Gamma(t+1)}\cdot\frac{\Gamma(d+t+1)}{\Gamma(d+1)d^t}\to \frac{1}{\Gamma(t+1)}
\end{equation*}
for $d\to\infty$ and it remains to show the monotonicity. Using $\Gamma(t+1)=t\cdot \Gamma(t)$ twice we get
\begin{equation*}
a_{d+1}
= \frac{\Gamma(d+t+2)}{\Gamma(d+2)\Gamma(t+1)} (d+1)^{-t}
= a_d \cdot\Bigl(\frac{d}{d+1}\Bigr)^t\cdot \frac{d+t+1}{d+1}
\eqspace.
\end{equation*}
Consequently, $(a_d)_{d\geq 1}$ is decreasing if and only if $\frac{d+t+1}{d+1} < \bigl(\frac{d+1}{d}\bigr)^t$ is satisfied for all $d\geq 1$ and $t>0$. 
In order to prove this we fix some $d\geq 1$ and show that
\begin{equation*}
f_d(t)\coloneqq \Bigl(1+\frac{1}{d}\Bigr)^t - \Bigl(1+\frac{t}{d+1}\Bigr) > 0
\end{equation*}
is satisfied for all $t>0$. To this end, we calculate the first and second derivative
\begin{align*}
f_d'(t) &= \Bigl(1+\frac{1}{d}\Bigr)^t \log\Bigl(1+\frac{1}{d}\Bigr) - \frac{1}{d+1}\\
f_d''(t)&= \Bigl(1+\frac{1}{d}\Bigr)^t \log^2\Bigl(1+\frac{1}{d}\Bigr)
\eqspace.
\end{align*}
Using $\log(1+x)\geq \frac{x}{1+x}$, which holds for all $x>-1$, for $x=\sfrac{1}{d}$, we get 
\begin{equation*}
f_d'(0) 
= \log\Bigl(1+\frac{1}{d}\Bigr) - \frac{1}{d+1} 
\geq \frac{\sfrac{1}{d}}{1+\sfrac{1}{d}} - \frac{1}{d+1} 
= 0
\eqspace.
\end{equation*}
Together with $f_d''(t) > 0$ we get $f'_d(t) > 0$ for all $t > 0$. Finally, $f_d(0) = 0$ and $f_d'(t) > 0$ gives $f_d(t) > 0$ for all $t>0$ and hence the assertion is proven.
\end{proof}

\begin{lem}\label{lem:entropy:range_const}
For all $C>0$, $d\geq 2 C e^2$, and $\varepsilon_0 \coloneqq 4 \exp\bigl(-\frac{d}{2 C}\log\bigl(\frac{d}{2 e C}\bigr)\bigr)$ the condition $\varepsilon_0 \leq 4\exp(-e^{1+\sigma^{-2}})$ for $\sigma=1$ in Theorem~\ref{thm:results:entropy-2} is satisfied and the quantity $K_{d,1,\varepsilon_0}$ defined in Theorem~\ref{thm:results:entropy-2} satisfies
\begin{equation*}
K_{d,1,\varepsilon_0}\leq (2\pi)^{-\sfrac{1}{2}}\cdot (4 e)^d (1+C)^d\cdot d^{-(d+\sfrac{1}{2})}
\eqspace.
\end{equation*}
Moreover, for $\frac{1}{2 e^2} \geq C\geq \frac{1}{\sqrt{360 e}}$ we have
\begin{equation*}
\exp\bigl(-90 d^2 - 11d -3\bigr)
\leq \varepsilon_0 
\eqspace.
\end{equation*}
\end{lem}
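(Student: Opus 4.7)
The plan is to prove the three claims of the lemma separately, with the middle one carrying the main analytic content and the others being routine verifications.

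\textbf{Claim (i)} (admissibility). I would reduce the condition $\varepsilon_0\leq 4\exp(-e^{1+\sigma^{-2}})$ for $\sigma=1$ to $\frac{d}{2C}\log\frac{d}{2eC}\geq e^2$ by taking logarithms. This is then immediate from $d\geq 2Ce^2$, which simultaneously yields $\frac{d}{2C}\geq e^2$ and $\frac{d}{2eC}\geq e$ (so $\log\frac{d}{2eC}\geq 1$), and multiplying finishes it.

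\textbf{Claim (ii)} (the main step). The crucial observation is that $\varepsilon_0$ has been engineered so that everything telescopes under the Lambert $W$ function. I would set $B\coloneqq\frac{d}{2eC}$, so that $y_0=\log(4/\varepsilon_0)=eB\log B$ and hence $y_0/e=B\log B$. Since $d\geq 2Ce^2$ forces $B\geq e$, the equation $xe^x=B\log B$ is solved by $x=\log B$, giving $\lamWo(y_0/e)=\log B$ and consequently $x_0=2y_0/\lamWo(y_0/e)=2eB=d/C$. Next, $\log y_0=1+\log B+\log\log B$, and applying $\log t\leq t-1$ with $t=\log B\geq 1$ yields $\log y_0\leq 2\log B$, and therefore $\log(y_0)/y_0\leq 2/(eB)=4C/d$. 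Combining with the straightforward estimate $\binom{x_0+d}{d}\leq (x_0+d)^d/d!=(d(1+C)/C)^d/d!$ gives $K_{d,1,\varepsilon_0}\leq (4(1+C))^d/d!$, and Stirling's formula $d!\geq\sqrt{2\pi d}(d/e)^d$ delivers exactly the stated bound $(2\pi)^{-1/2}(4e)^d(1+C)^d\, d^{-(d+1/2)}$.

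\textbf{Claim (iii)} (the lower bound on $\varepsilon_0$). Taking logarithms, this reduces to showing $\frac{d}{2C}\log\frac{d}{2eC}\leq 90d^2+11d+3+\log 4$. Writing $u=\frac{d}{2C}$, the left-hand side is $u(\log u-1)$, which is increasing in $u$ for $u\geq 1$, so the hypothesis $C\geq 1/\sqrt{360e}$ permits replacing $\frac{d}{2C}$ by $3d\sqrt{10e}$ (and $\frac{d}{2eC}$ by $3d\sqrt{10/e}$) on the left at the cost of only enlarging it. What remains is the elementary real-variable inequality $3d\sqrt{10e}\log(3d\sqrt{10/e})\leq 90d^2+11d+3+\log 4$ for $d\geq 1$, which follows from $\log d\leq d$ and a quadratic comparison whose discriminant is negative, giving the bound with comfortable slack.

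The main obstacle is the recognition in claim (ii) that the specific form of $\varepsilon_0$ makes $\lamWo(y_0/e)$ admit the closed form $\log B$; once this is seen, every subsequent estimate cascades cleanly, and the role of $C$ in the final constant emerges from the single factor $(1+C)^d$ coming from $x_0+d=d(1+C)/C$ combined with the $C^d$ in $(\log y_0/y_0)^d$.
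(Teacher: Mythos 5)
Your proof is correct and essentially matches the paper's: both hinge on the identity $x_0 = d/C$ (you derive it via the explicit closed form $W(y_0/e)=\log(d/(2eC))$, while the paper routes it through Lemma~\ref{lem:entropy:ef}), the same bounds $\binom{x_0+d}{d}\leq (d(1+C)/C)^d/d!$ and $\log(y_0)/y_0\leq 4C/d$, and Stirling's formula. For the final lower bound on $\varepsilon_0$ the paper simply applies $\log t\leq t$ to $\log(d/(2eC))$, giving $\varepsilon_0\geq\exp\bigl(-d^2/(4eC^2)\bigr)\geq\exp(-90d^2)$ in one line, whereas your monotonicity-plus-quadratic-discriminant argument reaches the same conclusion by a slightly longer but equally valid route.
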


\begin{proof}
Let us recall the definition of $y_0\coloneqq\log(\sfrac{4}{\varepsilon_0})$ and $x_0\coloneqq\frac{2y_0}{\lamWo(\sfrac{y_0}{e})} = \efii{1}(y_0)$ in Theorem~\ref{thm:results:entropy-2} where $\efii{1}$ is defined in Lemma~\ref{lem:entropy:ef}. 
Using the function $\ef{1}$ defined in Lemma~\ref{lem:entropy:ef} we can write
\begin{equation*}
\varepsilon_0 
= 4 \Bigl(\frac{2 e C}{d}\Bigr)^{\frac{d}{2C}} = 2\cdot\ef{1}(\sfrac{d}{C})
\eqspace.
\end{equation*}
Since $\sfrac{d}{C}\geq 2 e^2 \geq 2$ part~\ref{it:entropy:ef:iii} of Lemma~\ref{lem:entropy:ef}, which states $\efi{1} = \efii{1}\circ\log(\sfrac{2}{\cdot})$, is applicable and hence
\begin{equation*}
x_0 
= \efii{1}\circ\log(\sfrac{4}{\varepsilon_0}) 
= \efii{1}\circ\log\Bigl(\frac{2}{\ef{1}(\sfrac{d}{C})}\Bigr) 
= \sfrac{d}{C}
\eqspace.
\end{equation*}
Next, we prove the inequality $\varepsilon_0 \leq u\coloneqq 4\exp(-e^2)$. To this end, note that we have $\efii{1}\circ\log(\sfrac{4}{u}) = \efii{1}(e^2) = 2 e \exp(\lamWo(e)) = 2 e^2$ since $\lamWo(e)=1$. Our assumption $d\geq 2 C e^2$ implies
\begin{equation*}
\efii{1}\circ\log(\sfrac{4}{\varepsilon_0})
=\sfrac{d}{C}
\geq
2 e^2 
= \efii{1}\circ\log(\sfrac{4}{u})
\end{equation*}
and since $\efii{1}$ is increasing according to part~\ref{it:entropy:ef:ii} of Lemma~\ref{lem:entropy:ef} we get $\varepsilon_0 \leq u = 4 \exp(-e^2)$.
Now, we prove the bound on $K_{d,1,\varepsilon_0}$. To this end, we rewrite $K_{d,1,\varepsilon_0}$ using the representation of the binomial coefficient from \eqref{eq:results:binom}
\begin{align*}
K_{d,1,\varepsilon_0}
&= \binom{x_0 + d}{d} x_0^{-d} \cdot\Bigl(\frac{x_0\log(y_0)}{y_0}\Bigr)^d\\
&= \frac{1}{d!}\prod_{i=1}^d (1 + \sfrac{i}{x_0})\cdot\Bigl(\frac{2\log(y_0)}{\lamWo(\sfrac{y_0}{e})}\Bigr)^d
\eqspace.
\end{align*}
If we bound the first factor by using $\sfrac{i}{x_0} \leq \sfrac{d}{x_0} = C$ and if we bound the second factor by using $\log(y_0)\leq 2 \lamWo(\sfrac{y_0}{e})$ from Lemma~\ref{lem:entropy:lambW} then we get
\begin{equation*}
K_{d,1,\varepsilon_0} 
\leq \frac{4^d (1+C)^d}{d!}
\eqspace.
\end{equation*}
Together with Stirling's formula $d! \geq \sqrt{2\pi d}\cdot (\sfrac{d}{e})^d$ this gives the desired bound. Finally, note that $\log(t)\leq t$ and $C\geq \sfrac{1}{\sqrt{360 e}}$ yields 
\begin{equation*}
\varepsilon_0 
\geq \exp\Bigl(-\frac{d^2}{4 e C^2}\Bigr)
\geq \exp\bigl(-90 d^2\bigr)
\eqspace,
\end{equation*}
which proves the lower bound on $\varepsilon_0$.
\end{proof}

\begin{lem}\label{lem:entropy:poly_const_d}
For $\sigma, p>0$ there are constants $c_{\sigma, p}, C_{\sigma, p}>0$ such that $K_{d,\sigma,p}$ defined in \eqref{eq:results:poly_const-1} satisfies, for all $d\geq 1$
\begin{equation*}
K_{d,\sigma,p}
\leq C_{\sigma, p}\cdot \sqrt{d \log(d)}\cdot \exp\Bigl(c_{\sigma, p}\cdot d\cdot \frac{\log\log(d)}{\log(d)}\Bigr)
\eqspace.
\end{equation*}
\end{lem}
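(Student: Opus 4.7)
The plan is to bound $K_{d,\sigma,p}=\binom{t_0+d}{d}\cdot\frac{d+1}{ep}\cdot 4^{p/(d+1)}$ by first estimating $t_0$ from above and then applying Stirling's formula to the generalized binomial coefficient. The outer linear factor, combined with the prefactor emerging from Stirling, will ultimately produce exactly the $\sqrt{d\log d}$ that appears in the statement.

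The first step is to show that $t_0\leq C^{(1)}_{\sigma,p}\cdot d/\log d$ for all $d$ beyond some threshold $d_0=d_0(\sigma,p)$. For this I would use the asymptotic $\lamWo(x)\sim\log x$ as $x\to\infty$, more concretely a uniform lower bound of the form $\lamWo(x)\geq \log x-\log\log x$ valid on $[e,\infty)$, applied with $x=\sfrac{d+1}{p\sigma^2}$. This yields $\lamWo\bigl(\sfrac{d+1}{p\sigma^2}\bigr)\geq c^{(1)}_{\sigma,p}\log d$ for $d\geq d_0$. Since $4^{p/(d+1)}\leq 4^p$ and $\exp\bigl(1/\lamWo(\cdot)\bigr)\leq e$, plugging these into the definition of $t_0$ gives the claimed bound.

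The second step is to estimate $\binom{t_0+d}{d}=\Gamma(t_0+d+1)/(\Gamma(t_0+1)\Gamma(d+1))$ via Stirling. Applying $\log\Gamma(x+1)=(x+\tfrac12)\log x-x+\tfrac12\log(2\pi)+O(1/x)$ to each of the three gamma factors (assuming $t_0\geq 1$, which is ensured by enlarging $d_0$ if necessary) and rearranging yields, up to a multiplicative constant,
\[
\binom{t_0+d}{d} \leq \sqrt{\frac{t_0+d}{t_0\, d}}\cdot \exp\!\bigl((t_0+\tfrac12)\log(1+\sfrac{d}{t_0})+d\log(1+\sfrac{t_0}{d})\bigr).
\]
Inserting $t_0\leq C^{(1)}_{\sigma,p}\, d/\log d$ and using $\log(1+d/t_0)\leq\log\log d+O_{\sigma,p}(1)$ together with $d\log(1+t_0/d)\leq t_0$, the exponential is bounded by $\exp\bigl(c^{(2)}_{\sigma,p}\cdot d\log\log d/\log d+O(d/\log d)\bigr)$, while the square root is at most $O_{\sigma,p}(\sqrt{\log d/d})$.

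Multiplying by $\frac{d+1}{ep}\cdot 4^{p/(d+1)}=O_{\sigma,p}(d)$ turns the product $\sqrt{\log d/d}\cdot d$ into $\sqrt{d\log d}$, and the residual $O(d/\log d)$ in the exponent can be absorbed into a slightly enlarged constant $c_{\sigma,p}\coloneqq c^{(2)}_{\sigma,p}+\varepsilon$ because $d/\log d=o(d\log\log d/\log d)$; the finitely many small values $1\leq d<d_0$ are handled by inflating $C_{\sigma,p}$. The main obstacle will be the bookkeeping required to convert these clean asymptotics for Lambert's $W$ and the gamma function into constants that depend only on $\sigma$ and $p$ uniformly in $d\geq 1$, in particular handling the small-$d$ regime where $t_0$ may fall below $1$ and the Stirling error terms are not negligible.
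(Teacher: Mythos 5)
Your proposal follows essentially the same route as the paper's proof: establish $t_0 \asymp d/\log d$ via the asymptotics of Lambert's $W$, estimate $\binom{t_0+d}{d}$ with Stirling's formula, and combine the pieces to produce the $\sqrt{d\log d}$ prefactor and the $d\log\log d/\log d$ exponent. One small omission worth flagging: you only derive the upper bound $t_0 \leq C^{(1)}_{\sigma,p}\, d/\log d$, but your subsequent steps $\log(1+d/t_0)\leq\log\log d+O_{\sigma,p}(1)$ and $\sqrt{(t_0+d)/(t_0 d)}=O_{\sigma,p}(\sqrt{\log d/d})$ both rely on the matching lower bound $t_0 \geq c^{(1)}_{\sigma,p}\, d/\log d$ (otherwise $d/t_0$ could be much larger than $\log d$), which the paper secures by using the two-sided $\lamWo(t)\sim\log(t)$; it follows just as easily from the elementary upper bound $\lamWo(x)\leq\log x$ for $x\geq e$ together with $4^{p/(d+1)}\geq 1$ and $\exp(1/\lamWo(\cdot))\geq 1$, so this is a presentational gap rather than a failing step.
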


\begin{proof}
For this proof we use the usual notation $a_n\preccurlyeq b_n$ for two sequences $(a_n)_{n\geq 1}, (b_n)_{n\geq 1}$ iff there is a constant $c>0$ with $a_n\leq c b_n$ for all $n\geq 1$. Moreover, we write $a_n\asymp b_n$ 
iff both $a_n\preccurlyeq b_n$ and $a_n\succcurlyeq b_n$ hold.
Using $\lamWo(t)\sim\log(t)$ we get, for $d\to\infty$,
\begin{equation*}
t_0 
= \frac{2 (d+1)\cdot 4^{\frac{p}{d+1}}}{e p \cdot \lamWo(\frac{d+1}{p \sigma^2})}\exp\Bigl(\frac{1}{\lamWo(\frac{d+1}{p \sigma^2})}\Bigr)
\asymp \frac{d}{\lamWo(\frac{d+1}{p\sigma^2})}
\asymp \frac{d}{\log(\frac{d+1}{p\sigma^2})}
\asymp \frac{d}{\log(d)}
\eqspace.
\end{equation*}
Since $t_0\to\infty$ for $d\to\infty$, Lemma~\ref{lem:entropy:binom_behavior} and Stirling's formula $\Gamma(t+1)\sim \sqrt{2\pi t}\,(\sfrac{t}{e})^t$ yield
\begin{equation*}
\binom{t_0 + d}{d} 
= \frac{\Gamma(t_0 + d + 1)}{\Gamma(t_0 + 1)\Gamma(d + 1)}
\asymp\Bigl(\frac{1}{t_0}+\frac{1}{d}\Bigr)^{\sfrac{1}{2}}\Bigl(1+\frac{t_0}{d}\Bigr)^{d}\Bigl(1+\frac{d}{t_0}\Bigr)^{t_0}
\end{equation*}
for $d\to\infty$. Using the inequality $1+t\leq e^t$, which holds for all $t\in\R$, for $t=\sfrac{t_0}{d}$ we get
\begin{equation*}
\binom{t_0 + d}{d}
\preccurlyeq \sqrt{\frac{\log(d)}{d}}\cdot e^{t_0}\cdot\bigl(1+\sfrac{d}{t_0}\bigr)^{t_0}
\end{equation*}
for $d\to \infty$.
Consequently, we find a constant $C_{\sigma, p}>0$ with
\begin{equation*}
K_{d,\sigma,p}
= \binom{t_0 + d}{d} \cdot \frac{d+1}{e p }\cdot 4^{\frac{p}{d+1}}
\leq C_{\sigma, p}\cdot \sqrt{d \log(d)}\cdot\exp\Bigl(t_0\cdot\log\Bigl(e + e\frac{d}{t_0}\Bigr)\Bigr)
\eqspace.
\end{equation*}
Since, for $d\to \infty$, the exponent behaves like
\begin{equation*}
t_0\cdot\log\Bigl(e + e\frac{d}{t_0}\Bigr) 
\asymp \frac{d}{\log(d)}\cdot \log\Bigl(e + e\log(d)\Bigr) 
\asymp d\cdot\frac{\log\log(d)}{\log(d)}
\end{equation*}
there is a constant $c_{\sigma, p}>0$ independent of $d$ with the desired property.
\end{proof}

\begin{lem}\label{lem:entropy:poly_const_p}
For $0<p_0\leq \sfrac{1}{e}$ the quantities $K_{d,1,p}$ and $C_0$ defined in \eqref{eq:results:poly_const-1} and \eqref{eq:results:poly_const-2}, respectively, satisfy
\begin{equation*}
K_{d,1,p}
\leq \sfrac{1}{2}\cdot C_0^d \cdot\sqrt{d}\cdot \frac{(\sfrac{1}{p})^{d+1}}{\log^d(\sfrac{1}{p})}
\eqspace,\qquad\qquad 0<p\leq p_0,\ d\geq 1.
\end{equation*}
\end{lem}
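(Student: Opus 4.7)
The plan is to bound the binomial coefficient via Stirling's formula, which reduces the statement to two elementary inequalities on the quantities involved in $t_0$. Starting from $\binom{t_0+d}{d}\le (t_0+d)^d/d!$ together with Stirling's bound $d!\ge\sqrt{2\pi d}\,(d/e)^d$, one obtains $\binom{t_0+d}{d}\le\frac{1}{\sqrt{2\pi d}}(e+et_0/d)^d$, and hence
\[
K_{d,1,p}\le \frac{(d+1)\cdot 4^{p/(d+1)}}{ep\sqrt{2\pi d}}\,\bigl(e+et_0/d\bigr)^d.
\]
Comparing this with the target estimate, it suffices to prove that (i) $(e+et_0/d)\cdot p\log(1/p)\le C_0$ and (ii) $\frac{(d+1)\cdot 4^{p/(d+1)}}{ed\sqrt{2\pi}}\le\tfrac12$ for all $d\ge 1$ and $0<p\le p_0\le 1/e$.

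For claim (i) I distribute the product. The first summand $e\,p\log(1/p)$ is bounded by $e p_0\log(1/p_0)$ since $p\mapsto p\log(1/p)$ is increasing on $(0,1/e]$; this yields the first piece of $C_0$. For the second summand, substituting $t_0$ gives
\[
\frac{e t_0\,p\log(1/p)}{d}=\frac{2(d+1)\log(1/p)\cdot 4^{p/(d+1)}\cdot e^{1/W}}{d\,W},\qquad W\coloneqq\lamWo\!\bigl(\tfrac{d+1}{p}\bigr).
\]
I then use $4^{p/(d+1)}\le 2^{p_0}$ (from $p/(d+1)\le p_0/2$) and $e^{1/W}\le\exp(1/\lamWo(2/p_0))$ (from $W\ge\lamWo(2/p_0)$, which follows since $(d+1)/p\ge 2/p_0$). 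Thus (i) reduces to the key inequality
\[
\frac{(d+1)\log(1/p)}{d\,W}\le 2+\tfrac1e,
\]
which, multiplied by the factor $2$ appearing in front, produces exactly the second piece $(2+1/e)\cdot 2^{1+p_0}\exp(1/\lamWo(2/p_0))$ of $C_0$.

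To establish this key inequality I exploit the defining identity $We^W=(d+1)/p$, equivalently $\log(1/p)=W+\log W-\log(d+1)$, which lets me write
\[
\frac{(d+1)\log(1/p)}{d\,W}=\frac{d+1}{d}\Bigl(1+\frac{\log(W/(d+1))}{W}\Bigr).
\]
The elementary bound $\log(t)/t\le 1/e$ (maximum at $t=e$), applied to $t\coloneqq W/(d+1)$, gives $\log(W/(d+1))/W\le 1/(e(d+1))$, so the expression is at most $(1+1/d)+1/(ed)=1+(1+1/e)/d\le 2+1/e$ for $d\ge 1$ (with equality at $d=1$). Claim (ii) is then purely numerical: $(d+1)/d\le 2$ and $4^{p/(d+1)}\le 4^{p_0/2}\le 2^{1/e}$ give a left-hand side at most $2^{1+1/e}/(e\sqrt{2\pi})\approx 0.378\le 1/2$.

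The hard part is pinning down the sharp constant $2+1/e$ in the key inequality. A naive estimate $\log(1/p)/W\le 1+\log(W)/W\le 1+1/e$ followed by $(d+1)/d\le 2$ only gives $2+2/e$, which is too weak to match the precise constant $C_0$ in the statement. The improvement comes from combining the correction $-\log(d+1)$ with $\log W$ into $\log(W/(d+1))$, so that the $\log(t)/t\le 1/e$ trick absorbs an additional factor $1/(d+1)$ that would otherwise be lost.
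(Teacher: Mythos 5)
Your proof is correct and yields exactly the constant $C_0$ from \eqref{eq:results:poly_const-2}. The overall strategy is the same as the paper's (Stirling's formula for the binomial coefficient, the relation $\lamWo(s)e^{\lamWo(s)}=s$ to control the Lambert-$W$ ratio, and a numerical cleanup), but the execution differs in two places, and in both your route is the more elementary one. First, the paper bounds $t_0$ via the auxiliary function $f(p)\coloneqq\frac{g(p_0)}{e}d\cdot\frac{\sfrac{1}{p}}{\log(\sfrac{1}{p})}$, then feeds $f(p_0)$ into the monotonicity-of-$G_d$ machinery from Lemma~\ref{lem:entropy:binom_bound}, and finally controls $\binom{f(p_0)+d}{d}z_0^{-d}d$ through the $\Gamma$-function representation of Lemma~\ref{lem:entropy:binom_behavior} together with a quantitative Stirling estimate (the $e^{\sfrac{1}{60}}$ factor); you bypass all of this by using the crude but sufficient $\binom{t_0+d}{d}\leq(t_0+d)^d/d!$ once, and the intermediate objects $f$, $z_0$, $g(p_0)$ never appear. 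Second, the key inequality $\frac{(d+1)\log(\sfrac{1}{p})}{d\lamWo(\sfrac{(d+1)}{p})}\leq 2+\sfrac{1}{e}$ is in the paper an application of Lemma~\ref{lem:entropy:lambW} with the non-obvious parameter choice $\sigma^2=\frac{1}{(d+1)e}$, whose global maximum $1+\sigma^2=1+\frac{1}{(d+1)e}$ is exactly your bound; you rederive it from scratch via $\log(\sfrac{1}{p})=W+\log W-\log(d+1)$ and $\sfrac{\log t}{t}\leq\sfrac{1}{e}$, which is more transparent and makes visible why the correction term $-\log(d+1)$ must be grouped with $\log W$ before applying the maximum bound --- exactly the point you highlight in your closing remark. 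What the paper's longer route buys is the consistent reuse of Lemmas~\ref{lem:entropy:binom_bound}, \ref{lem:entropy:lambW}, and \ref{lem:entropy:binom_behavior} across all results of Section~\ref{sec:isotropic}; what yours buys is a self-contained, shorter argument.
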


\begin{proof}
As a first step we bound $t_0$. To this end, 
we write $g(p_0)\coloneqq 2^{1+p_0}\cdot \exp\bigl(\sfrac{1}{\lamWo(\sfrac{2}{p_0})}\bigr)\cdot(2 + \sfrac{1}{e})$
and bound $\lamWo$ by $\log$ with the help of Lemma~\ref{lem:entropy:lambW}, that is 
\begin{equation*}
\frac{\log(\sfrac{1}{p})}{\lamWo(\frac{d+1}{p})}
= \frac{1 + \log(\frac{1}{(d+1)e}) + \log(\frac{d+1}{p})}{\lamWo(\frac{d+1}{p})}
\leq 1 + \frac{1}{(d+1)e}
\leq d\cdot \frac{2 + \sfrac{1}{e}}{d+1}
\eqspace.
\end{equation*}
Together with $p\leq p_0$ and $d\geq 1$ we can bound $t_0$ by
\begin{equation*}
t_0
= \frac{2 (d+1)\cdot 4^{\frac{p}{d+1}}}{e p \cdot \lamWo(\frac{d+1}{p })}\exp\Bigl(\frac{1}{\lamWo(\frac{d+1}{p})}\Bigr) 
\leq \frac{g(p_0)}{e}\cdot d\cdot\frac{\sfrac{1}{p}}{\log(\sfrac{1}{p})}
\eqqcolon f(p)
\eqspace.
\end{equation*}
Since $\beta(t) = \frac{t}{\log(t)}$ is increasing on $[e,\infty)$ according to \eqref{eq:entropy:entropy:temp} and $\sfrac{1}{p}\geq \sfrac{1}{p_0}\geq e$ we get $f(p)\geq f(p_0) = \sfrac{g(p_0)}{e}\cdot d\cdot z_0$, 
where $z_0\coloneqq\frac{\sfrac{1}{p_0}}{\log(\sfrac{1}{p_0})}$. 
Repeating the proof of Lemma~\ref{lem:entropy:binom_bound} together with $d\geq 1$ and $\sfrac{1}{p}\geq\sfrac{1}{p_0}$ yields
\begin{align*}
K_{d,1,p}
= \binom{t_0 + d}{d} \cdot \frac{d+1}{e p }\cdot 4^{\frac{p}{d+1}}
&\leq \binom{f(p_0) + d}{d}\cdot \Bigl(\frac{f(p)}{f(p_0)}\Bigr)^d\cdot d \cdot \frac{2^{1+p_0}}{e}\cdot\sfrac{1}{p}\\
&= \binom{f(p_0) + d}{d}\cdot z_0^{-d}\cdot d \cdot \frac{2^{1+p_0} }{e}\cdot\frac{(\sfrac{1}{p})^{d+1}}{\log^d(\sfrac{1}{p})}
\eqspace.
\end{align*}
Now, we bound the quantities depending on $d$. To this end, we use Lemma~\ref{lem:entropy:binom_behavior} together with Stirling's formula, $f(p_0)\geq 4$, $d\geq 1$, and $1+t\leq e^t$ for $t=\sfrac{d}{f(p_0)}$
\begin{align*}
\binom{f(p_0) + d}{d}\cdot z_0^{-d}\cdot d
&\leq \frac{e^{\sfrac{1}{60}}}{\sqrt{2\pi}}\cdot \Bigl(\frac{1}{d} + \frac{1}{f(p_0)}\Bigr)^{\sfrac{1}{2}}\Bigl(1 + \frac{f(p_0)}{d}\Bigr)^d\Bigl(1 + \frac{d}{f(p_0)}\Bigr)^{f(p_0)}\cdot z_0^{-d}\cdot d\\
&\leq \frac{e^{\sfrac{1}{60}}}{\sqrt{2\pi}}\cdot\Bigl(1 + \frac{e}{g(p_0)\cdot z_0}\Bigr)^{\sfrac{1}{2}}\cdot\Bigl(\frac{1 + \sfrac{g(p_0)}{e}\cdot z_0}{z_0}\Bigr)^d\cdot e^d\cdot d^{\sfrac{1}{2}}
\eqspace.
\end{align*}
Since $C_0=\sfrac{e}{z_0} + g(p_0)$ and the arising quantities that are independent of $p$ and $d$ satisfy
\begin{equation*}
\frac{e^{\sfrac{1}{60}}}{\sqrt{2\pi}}\cdot\Bigl(1 + \frac{e}{g(p_0)\cdot z_0}\Bigr)^{\sfrac{1}{2}}\cdot \frac{2^{1+p_0}}{e} 
\leq 
\frac{e^{\sfrac{1}{60}}}{\sqrt{2\pi}}\cdot\Bigl(1 + \frac{1}{2(2+\sfrac{1}{e})}\Bigr)^{\sfrac{1}{2}}\cdot \frac{2^{1+\sfrac{1}{e}}}{e} 
\approx 0.4239
\leq \sfrac{1}{2}
\end{equation*}
the assertion is proven.
\end{proof}
 
% !TeX spellcheck = en_US
%-------------------------------------------------------

\subsection{Anisotropic Gaussian Kernels}\label{sec:decomposition}

%-------------------------------------------------------

In this section we first provide some general theory about covering numbers of RKHSs and finally prove Theorem~\ref{thm:results:decomposition}. To this end, we introduce some notation. For a fixed bounded kernel $k$ defined on a set $X$ we often consider its restriction to different subsets $Y\subseteq X$. Consequently, we highlight the considered domain by writing $H(Y)$ for the corresponding RKHS and by using the abbreviation $\emb{}{Y}$
for the corresponding embedding $\Id:H(Y)\to\ell_\infty(Y)$.
Recall that $\emb{}{Y}$ is well-defined according to \cite[Lemma~4.23]{StCh2008}. 

\begin{lem}\label{lem:decomposition:sets}
Let $T:Y\to X$ be a mapping between two non-empty sets and $k$ be a bounded kernel on $X$ with RKHS $H(X)$. Then 
\begin{equation}\label{eq:decomposition:transformed_kernel}
k_T(y,y')\coloneqq k(T(y),T(y'))
\eqspace,\qquad\qquad y,y'\in Y,
\end{equation}
defines a bounded kernel on $Y$ with RKHS $H_T(Y)= \bigl\{f\circ T:\ f\in H(X)\bigr\}$ and the corresponding RKHS-norm satisfies 
\begin{equation*}
\|f\circ T\|_{H_T(Y)}\leq \|f\|_{H(X)}
%\eqspace, 
\end{equation*}
for $f\in H(X)$. Moreover, the covering numbers 
satisfy
\begin{equation}\label{eq:decomposition:sets}
%\covering{\varepsilon}{\emb{T}{Y}} \leq \covering{\varepsilon}{\emb{}{X}}
\covering[big]{\varepsilon}{\Id:H_T(Y)\to\ell_\infty(Y)} 
\leq \covering[big]{\varepsilon}{\Id: H(X)\to\ell_\infty(X)}
\eqspace,\qquad\qquad \varepsilon>0.
\end{equation}
If, in addition, $T$ is bijective, then equality holds in \eqref{eq:decomposition:sets}.
\end{lem}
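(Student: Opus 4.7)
My plan is to proceed in three stages: first identify the RKHS structure of $k_T$, then derive the norm inequality as a by-product, and finally translate the whole thing into a covering-number statement via a commutative diagram.

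For the first stage I would introduce the pullback map $\Psi: H(X)\to \R^Y$ given by $\Psi(f)\coloneqq f\circ T$, and show that the image $V\coloneqq\Psi(H(X))$, equipped with the quotient Hilbert-space norm $\|g\|_V \coloneqq \inf\{\|f\|_{H(X)} : \Psi(f)=g\}$, is the RKHS of $k_T$. That $\Psi$ is a kernel (positive semi-definite, symmetric) and that $k_T$ is bounded follow trivially from the corresponding properties of $k$. The quotient norm is realised by the orthogonal projection onto $(\ker\Psi)^\perp$, which makes $V$ a Hilbert space and makes $\Psi\vert_{(\ker\Psi)^\perp}$ an isometric isomorphism. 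To see that $k_T$ is the reproducing kernel of $V$, note that $k_T(\cdot,y)=\Psi(k(\cdot,T(y)))\in V$, and for any $g\in V$ with preimage $f_g\in(\ker\Psi)^\perp$ the reproducing property of $k$ gives
\begin{equation*}
\langle g, k_T(\cdot,y)\rangle_V
= \langle f_g, k(\cdot,T(y))\rangle_{H(X)}
= f_g(T(y))
= g(y)
\eqspace,
\end{equation*}
where the first equality uses that the component of $k(\cdot,T(y))$ in $\ker\Psi$ is orthogonal to $f_g$. Uniqueness of the reproducing kernel then yields $H_T(Y)=V$, and the very definition of the quotient norm gives $\|f\circ T\|_{H_T(Y)}\leq\|f\|_{H(X)}$.

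For the covering number inequality, define the pullback $R:\ell_\infty(X)\to\ell_\infty(Y)$ by $R(\varphi)\coloneqq\varphi\circ T$, which is a contraction, and observe the diagram identity $R\circ\emb{}{X}=\emb{}{Y}\circ\Psi$. The key observation is that $\Psi$ is a metric quotient map: since the infimum in the quotient norm is attained on $(\ker\Psi)^\perp$, we have the exact equality $\Psi(\cuball{H(X)})=\cuball{H_T(Y)}$, and consequently
\begin{equation*}
\emb{}{Y}(\cuball{H_T(Y)}) = R\bigl(\emb{}{X}(\cuball{H(X)})\bigr)
\eqspace.
\end{equation*}
Given any $\varepsilon$-cover $\varphi_1,\dots,\varphi_n\in\ell_\infty(X)$ of $\emb{}{X}(\cuball{H(X)})$, the contractivity $\|R\|\leq 1$ immediately shows that $R(\varphi_1),\dots,R(\varphi_n)$ form an $\varepsilon$-cover of $R(\emb{}{X}(\cuball{H(X)}))=\emb{}{Y}(\cuball{H_T(Y)})$, proving \eqref{eq:decomposition:sets}. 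For the bijective case, applying the same argument with $T^{-1}:X\to Y$ and the kernel $k_T$ on $Y$ gives the reverse inequality and hence equality.

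The only step I expect to require a little care is the argument that $\Psi(\cuball{H(X)})=\cuball{H_T(Y)}$ exactly rather than just densely, since otherwise one would obtain the covering number bound only up to a closure argument. The cleanest way to handle this is to invoke attainment of the infimum defining the quotient norm, which is automatic for the orthogonal decomposition $H(X)=\ker\Psi \oplus (\ker\Psi)^\perp$; everything else is diagram chasing.
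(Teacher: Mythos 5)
Your proof is correct and follows essentially the same route as the paper: both identify $f\mapsto f\circ T$ as a metric surjection from $H(X)$ onto $H_T(Y)$, compose with the contractive pullback on $\ell_\infty$, and chase the resulting commutative diagram. The only difference is cosmetic: the paper delegates the RKHS structure, the norm inequality, and the metric surjection property to \cite[Theorem~4.21]{StCh2008}, whereas you reconstruct these facts directly via the orthogonal decomposition $H(X)=\ker\Psi\oplus(\ker\Psi)^\perp$; your observation that attainment of the quotient infimum gives exact equality of closed unit balls (not merely open ones) is the correct resolution of the one genuinely delicate point.
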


\begin{proof}
Let $\Phi:X\to H(X)$ be the canonical feature map of $k$, that is 
$\Phi(x)\coloneqq k(x,\cdot)$ for $x\in X$. Then it is easy to see that $\Phi_T\coloneqq\Phi\circ T$ is a feature map for $k_T$. Consequently, $k_T$ is a kernel on $Y$, and according to \cite[Theorem~4.21]{StCh2008} the RKHS of $k_T$ has the claimed form, the claimed norm inequality is satisfied, and $S_H:H(X)\to H_T(Y)$ defined by $f\mapsto f\circ T$ is a metric surjection, i.e.\ $S_H\ouball{H(X)} = \ouball{H_T(Y)}$. Now, it remains to prove the covering number bounds. To this end, we define the mapping $S_\infty:\ell_\infty(X)\to\ell_\infty(Y)$ by $f\mapsto f\circ T$. 
Using the metric surjectivity of $S_H$
and $\emb{T}{Y}\circ S_H = S_\infty\circ\emb{}{X}$ we get, for $\varepsilon>0$,
\begin{equation*}
\covering{\varepsilon}{\emb{T}{Y}}
=\covering{\varepsilon}{\emb{T}{Y}\circ S_H}
=\covering{\varepsilon}{S_\infty\circ \emb{}{X}}
\eqspace.
\end{equation*}
Since $\|S_\infty f\|_{\ell_\infty(Y)} = \sup_{y\in Y}|f(T(y))| \leq \|f\|_{\ell_\infty(X)}$
is satisfied for all $f\in\ell_\infty(X)$ we have $\|S_\infty\|\leq 1$ and together with \eqref{eq:covering:add_mult} this yields the assertion.
If $T$ is bijective we can exchange the role of $X$ and $Y$ and hence we get the claimed equality.
\end{proof}

\begin{lem}\label{lem:decomposition:partition}
Let $X=X_1\cup X_2$ be the disjoint union of non-empty sets $X_1, X_2$ and $k$ be a bounded kernel on $X$ with RKHS $H(X)$. 
Then for all 
 $\varepsilon>0$ we have
\begin{equation*}
\covering[big]{\varepsilon}{\Id:H(X)\to\ell_\infty(X)} 
\leq \covering[big]{\varepsilon}{\Id:H(X_1)\to\ell_\infty(X_1)}\cdot \covering[big]{\varepsilon}{\Id:H(X_2)\to\ell_\infty(X_2)}
\eqspace.
\end{equation*}
\end{lem}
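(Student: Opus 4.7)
The plan is to build a covering of the unit ball of $H(X)$ in $\ell_\infty(X)$ by ``gluing'' coverings of the unit balls of $H(X_1)$ and $H(X_2)$ piece by piece, exploiting the fact that a function on the disjoint union $X_1 \cup X_2$ is determined by its two restrictions and its sup-norm equals the maximum of the two restricted sup-norms.

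First I would note that the restriction maps $R_i:H(X)\to H(X_i)$, $f\mapsto f|_{X_i}$, are norm non-increasing. This is an immediate consequence of Lemma~\ref{lem:decomposition:sets} applied to the inclusion $T_i:X_i\hookrightarrow X$: the pulled-back kernel $k_{T_i}$ coincides with the restriction of $k$ to $X_i$, so its RKHS is $H(X_i)$, and the lemma provides $\|f|_{X_i}\|_{H(X_i)}\leq \|f\|_{H(X)}$. In particular, $R_i$ maps $\cuball{H(X)}$ into $\cuball{H(X_i)}$.

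Next, fix optimal $\varepsilon$-coverings of $\emb{}{X_1}\cuball{H(X_1)}$ and $\emb{}{X_2}\cuball{H(X_2)}$, say $g_1,\dots,g_{N_1}\in\ell_\infty(X_1)$ and $h_1,\dots,h_{N_2}\in\ell_\infty(X_2)$, where $N_i=\covering{\varepsilon}{\Id:H(X_i)\to\ell_\infty(X_i)}$. For each pair $(i,j)$ I would define $u_{i,j}\in\ell_\infty(X)$ by the gluing
\begin{equation*}
u_{i,j}(x)\coloneqq \begin{cases} g_i(x), & x\in X_1,\\ h_j(x), & x\in X_2,\end{cases}
\end{equation*}
which is well-defined and bounded because $X_1$ and $X_2$ are disjoint and cover $X$.

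Finally, given any $f\in\cuball{H(X)}$, the previous step produces indices $i,j$ with $\|f|_{X_1}-g_i\|_{\ell_\infty(X_1)}\leq \varepsilon$ and $\|f|_{X_2}-h_j\|_{\ell_\infty(X_2)}\leq \varepsilon$, and then
\begin{equation*}
\|f-u_{i,j}\|_{\ell_\infty(X)}
= \max\bigl(\|f|_{X_1}-g_i\|_{\ell_\infty(X_1)},\,\|f|_{X_2}-h_j\|_{\ell_\infty(X_2)}\bigr)\leq \varepsilon.
\end{equation*}
Hence $\{u_{i,j}\}$ is an $\varepsilon$-covering of $\emb{}{X}\cuball{H(X)}$ of cardinality at most $N_1 N_2$, which is exactly the claimed inequality. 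There is no real obstacle here; the only point to be careful about is that the covering numbers as defined in the paper allow centers to lie in the ambient Banach space, so the glued functions $u_{i,j}$ are admissible even though they need not lie in $H(X)$.
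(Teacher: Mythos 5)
Your proof is correct and follows essentially the same route as the paper: glue $\varepsilon$-nets of the two restricted embeddings into functions on $X$, use Lemma~\ref{lem:decomposition:sets} to see that restriction $H(X)\to H(X_i)$ is norm non-increasing, and observe that the $\ell_\infty(X)$-norm is the maximum of the two restricted sup-norms. Your closing remark that the centers need not lie in $H(X)$ is a good sanity check but, as you say, poses no issue with the definition of covering numbers used here.
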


\begin{proof}
Let $m\coloneqq\covering{\varepsilon}{\emb{}{X_1}}$ and $n\coloneqq\covering{\varepsilon}{\emb{}{X_2}}$. Moreover, choose corresponding $\varepsilon$-nets $f_1,\ldots, f_m\in \ell_\infty(X_1)$ and $g_1,\ldots,g_n\in\ell_\infty(X_2)$. Then for each $i\in\{1,\ldots, m\}$ and each $j\in\{1,\ldots,n\}$ we define
\begin{equation*}
h_{i,j}(x) \coloneqq \begin{cases}
f_i(x), & x\in X_1\\
g_j(x), & x\in X_2\eqspace,
\end{cases}
\qquad \text{for }x\in X.
\end{equation*}
This defines at most $m\cdot n$ different elements of $\ell_\infty(X)$ and it remains to show that $h_{i,j}$ for $i=1,\ldots, m$ and $j=1,\ldots,n$ defines an $\varepsilon$-net of $\cuball{H(X)}$. For $h\in H(X)$ with $\|h\|_{H(X)}\leq 1$ we have $h|_{X_\ell}\in H(X_\ell)$ with $\|h|_{X_\ell}\|_{H(X_\ell)}\leq 1$, for $\ell=1,2$, see Lemma~\ref{lem:decomposition:sets}. Consequently, there is an $i\in\{1,\ldots,m\}$ and a $j\in\{1,\ldots,n\}$ with $\|h|_{X_1} - f_{i}\|_{\ell_\infty(X_1)} \leq \varepsilon$ and $\|h|_{X_2} - g_j\|_{\ell_\infty(X_2)} \leq \varepsilon$, respectively. For this choice of $i$ and $j$ we have
\begin{equation*}
\|h - h_{i,j}\|_{\ell_\infty(X)} = \max\{\|h|_{X_1} - f_{i}\|_{\ell_\infty(X_1)}, \|h|_{X_2} - g_{j}\|_{\ell_\infty(X_2)}\} \leq \varepsilon
\end{equation*}
and hence the assertion is proven.
\end{proof}

So far, we considered bounded kernels on general sets. In the following, we investigate bounded kernels $k:V\times V\to\R$ on a vector space $V$. 
The kernel $k$ is called \emph{translation invariant} along $a\in V$ if
\begin{equation*}
k(v+a, v'+a) = k(v,v')
\end{equation*}
is satisfied for all $v,v'\in V$. In this case the transformation $T(x)\coloneqq x+a$ does not change the kernel, i.e.\ $k=k_T$. Since $T$ is bijective as a mapping $X\to a+X$, Lemma~\ref{lem:decomposition:sets} yields
\begin{equation}\label{eq:decomposition:tansl_invar}
\covering[big]{\varepsilon}{\Id:H(X)\to\ell_\infty(X)} 
= \covering[big]{\varepsilon}{\Id:H(X+a)\to\ell_\infty(X+a)}
\eqspace,\qquad\qquad \varepsilon>0.
\end{equation}
If $k$ is translation invariant along all $a\in U\subseteq V$ for some subspace $U\subseteq V$, then we call $k$ translation invariant along $U$.

\begin{lem}\label{lem:decomposition:general}
Let $(V,\|\cdot\|)$ be a Banach space with complemented subspaces $V_1,V_2\subseteq V$, i.e.\ $V=V_1+ V_2$ and $V_1\cap V_2 = \{0\}$. Moreover, let $X_i\subseteq V_i$ be non-empty subsets, for $i=1,2$, and $k$ be a bounded kernel on $V$. If $k$ is translation invariant along $V_1$ and $X_1$ is relatively compact, then the log-covering numbers satisfy, for $\delta>0$ and $\varepsilon>0$,
\begin{equation*}
\coveringl[big]{\varepsilon}{\Id:H(X_1 + X_2)\to\ell_\infty(X_1 + X_2)}
\leq \covering{\delta}{X_1}\cdot \coveringl[big]{\varepsilon}{\Id:H(\delta\cuball{V_1} + X_2)\to\ell_\infty(\delta\cuball{V_1} + X_2)}
\eqspace.
\end{equation*}
\end{lem}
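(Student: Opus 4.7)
The plan is to reduce the log-covering number of $\Id:H(X_1+X_2)\to\ell_\infty(X_1+X_2)$ to $N$ copies of the log-covering number of the embedding on $\delta\cuball{V_1}+X_2$, where $N\coloneqq\covering{\delta}{X_1}$. The core idea is to cover $X_1$ by $N$ balls of radius $\delta$ centered at points $a_1,\ldots,a_N\in V_1$ (choosing centers in $V_1$ is possible since $X_1\subseteq V_1$), so that $X_1+X_2\subseteq\bigcup_{j=1}^N(a_j+\delta\cuball{V_1}+X_2)$. Here the relative compactness of $X_1$ ensures that $N<\infty$.

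First I would extend Lemma~\ref{lem:decomposition:partition} from two pieces to $N$ pieces by a straightforward induction, yielding that for any disjoint decomposition $X=\bigsqcup_{j=1}^N X_j'$ one has $\coveringl{\varepsilon}{\emb{}{X}}\leq\sum_{j=1}^N\coveringl{\varepsilon}{\emb{}{X_j'}}$. Next, setting $U_j\coloneqq X_1\cap(a_j+\delta\cuball{V_1})$ and peeling off overlaps via $W_j\coloneqq(U_j+X_2)\setminus\bigcup_{i<j}(U_i+X_2)$, I obtain a disjoint decomposition $X_1+X_2=\bigsqcup_{j=1}^N W_j$ (discarding indices with $W_j=\emptyset$) such that $W_j\subseteq a_j+\delta\cuball{V_1}+X_2$ for each $j$.

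Applying the $N$-fold partition bound and then Lemma~\ref{lem:decomposition:sets} to the inclusion $W_j\hookrightarrow a_j+\delta\cuball{V_1}+X_2$ gives
\begin{equation*}
\coveringl[big]{\varepsilon}{\emb{}{X_1+X_2}}
\leq\sum_{j=1}^N\coveringl[big]{\varepsilon}{\emb{}{a_j+\delta\cuball{V_1}+X_2}}.
\end{equation*}
Since $a_j\in V_1$ and $k$ is translation invariant along $V_1$, the identity \eqref{eq:decomposition:tansl_invar} (with $a=a_j$ and $X=\delta\cuball{V_1}+X_2$) shows that each summand equals $\coveringl{\varepsilon}{\emb{}{\delta\cuball{V_1}+X_2}}$. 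Summing over the $N\leq\covering{\delta}{X_1}$ indices yields the claim.

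The main obstacle is the bookkeeping in the two auxiliary steps: (i) making sure the disjoint sets $W_j$ behave well enough that Lemma~\ref{lem:decomposition:partition} can be iterated (this is taken care of by the peeling construction together with the fact that, by Lemma~\ref{lem:decomposition:sets}, the restriction of the RKHS to each $W_j$ is well-defined and its norm is bounded by the ambient one), and (ii) justifying that the $\delta$-net for $X_1$ can be taken with centers in $V_1$, so that each translate $a_j+\delta\cuball{V_1}+X_2$ really is a translate along $V_1$ of $\delta\cuball{V_1}+X_2$ and the translation invariance hypothesis applies. Beyond these technicalities, the argument is a direct composition of the three preparatory results and introduces no further estimates.
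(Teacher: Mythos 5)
Your proof is correct and follows essentially the same path as the paper's: choose a $\delta$-net of $X_1$ with centers in $V_1$, produce a disjoint decomposition of $X_1+X_2$ into pieces each contained in some $a_j + \delta\cuball{V_1}+X_2$, iterate Lemma~\ref{lem:decomposition:partition}, restrict via Lemma~\ref{lem:decomposition:sets}, and conclude by translation invariance through \eqref{eq:decomposition:tansl_invar}. The only cosmetic difference is that you peel the overlaps directly at the level of $X_1+X_2$, whereas the paper first partitions $X_1$ itself and then uses the complemented-subspace hypothesis $V_1\cap V_2=\{0\}$ to conclude that the sums $X_{1,i}+X_2$ remain disjoint.
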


\begin{proof}
Let us fix some $\varepsilon,\delta>0$ and set $n\coloneqq\covering{\delta}{X_1}$. For a minimal $\delta$-net $x_{1,1},\ldots, x_{1,n}\in V_1$ of $X_1$ we choose a partition $X_{1,1},\ldots, X_{1,n}$ of $X_1$ with $X_{1,i}\subseteq x_{1,i} + \delta\cuball{V_1}$ for all $i=1,\ldots, n$. Since we have chosen a minimal $\delta$-net $X_{1,i}\not=\emptyset$ is satisfied for $i=1,\ldots, n$. Because $X_i\subseteq V_i$, for $i=1,2$, and $V_1,V_2$ are complemented subspaces the sets $X_{1,i}+ X_2$, for $i=1,\ldots,n$, form a partition of $X_1 + X_2$ with $X_{1,i}+ X_2\subseteq x_{1,i} + \delta\cuball{V_1}+ X_2$. A multiple application of Lemma~\ref{lem:decomposition:partition} and an application of Lemma~\ref{lem:decomposition:sets} for $T=\Id$ yield
\begin{equation*}
\coveringl{\varepsilon}{\emb{}{X}} 
\leq \sum_{i=1}^n \coveringl[big]{\varepsilon}{\emb[big]{}{X_{1,i} + X_2}}
\leq \sum_{i=1}^n \coveringl[big]{\varepsilon}{\emb[big]{}{x_{1,i} + (\delta\cuball{V_1})+ X_2}}
\eqspace.
\end{equation*}
Since $k$ is translation invariant along $V_1$, Equation~\eqref{eq:decomposition:tansl_invar} yields the assertion.
\end{proof}

\begin{proof}[Proof of Theorem~\ref{thm:results:decomposition}]
Let $X\subseteq\R^d$ be a bounded subset and $\vecsigma=(\sigma_1,\ldots, \sigma_d)\in(0,\infty)^d$. 
With the notation introduced in \eqref{eq:decomposition:transformed_kernel} the Gaussian kernel then writes as $k_\vecsigma = k_{D_\vecsigma}$. Since the diagonal operator $D_\vecsigma: X\to D_\vecsigma X$ is bijective, Lemma~\ref{lem:decomposition:sets} yields $\coveringl{\varepsilon}{\emb{\vecsigma}{X}} = \coveringl{\varepsilon}{\emb{1}{D_\vecsigma X}}$. Together with Lemma~\ref{lem:decomposition:general} for $\delta=1$, $V_1=\R^d$ (equipped with the Euclidean norm), $V_2=\{0\}$, and $X_1=D_\vecsigma X$, $X_2=\{0\}$ we get the assertion.
\end{proof}

Finally, we present a lemma bounding the covering numbers of convex sets $X\subseteq\R^d$. This result is well-known but we did not find exactly this one in the literature and hence we included a proof for convenience.

\begin{lem}\label{lem:decomposition:volume}
Let $X\subseteq\R^d$ be a convex set and $r_0>0$ such that there is an $a\in\R^d$ with $a + r_0 B_2^d\subseteq X$. Then 
we have
\begin{equation}\label{eq:decomposition:volume}
\covering{2\varepsilon}{X}\leq \frac{\lambda^d(X)}{\lambda^d(B_2^d)} \Bigl(\frac{1}{r_0}+\frac{1}{\varepsilon}\Bigr)^d
\eqspace,\qquad\qquad \varepsilon>0,
\end{equation}
where the covering numbers are with respect to the Euclidean norm and $\lambda^d$ denotes the $d$-dimensional Lebesgue measure.
\end{lem}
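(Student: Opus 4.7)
The plan is a standard volume/packing argument adapted to use the inscribed ball $a + r_0 B_2^d$. First I would choose a maximal $2\varepsilon$-separated family $x_1,\dots,x_n \in X$, that is, a maximal collection satisfying $\|x_i - x_j\| \geq 2\varepsilon$ for all $i \neq j$. By maximality, for every $x \in X$ there is some $x_i$ with $\|x-x_i\| < 2\varepsilon$, so $\{x_1,\dots,x_n\}$ is a $2\varepsilon$-net of $X$ and thus $\covering{2\varepsilon}{X} \leq n$. The task then reduces to bounding $n$ from above by the right-hand side of \eqref{eq:decomposition:volume}.

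For the volume argument, I would shrink each $x_i$ toward the center $a$ of the inscribed ball. Set
\begin{equation*}
t \coloneqq \frac{\varepsilon}{r_0 + \varepsilon},
\qquad
y_i \coloneqq t\, a + (1-t)\, x_i,
\qquad
\rho \coloneqq t\, r_0 = \frac{\varepsilon r_0}{r_0 + \varepsilon}.
\end{equation*}
Using that $X$ is convex and $a + r_0 B_2^d \subseteq X$, the identity
\begin{equation*}
y_i + \rho B_2^d
= t\,(a + r_0 B_2^d) + (1-t)\, x_i
\end{equation*}
shows that the closed balls $y_i + \rho B_2^d$ all lie in $X$. Moreover, $y_i - y_j = (1-t)(x_i - x_j)$, so
\begin{equation*}
\|y_i - y_j\| = (1-t)\,\|x_i - x_j\| \geq (1-t)\cdot 2\varepsilon = 2\rho,
\end{equation*}
which guarantees that the open balls $y_i + \rho \ouball{}$ are pairwise disjoint.

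Summing volumes then yields $n\,\rho^d\,\lambda^d(B_2^d) \leq \lambda^d(X)$, and substituting $\rho = \varepsilon r_0/(r_0+\varepsilon)$ gives exactly the desired inequality via
\begin{equation*}
n \leq \frac{\lambda^d(X)}{\lambda^d(B_2^d)}\cdot \left(\frac{r_0+\varepsilon}{r_0\,\varepsilon}\right)^d = \frac{\lambda^d(X)}{\lambda^d(B_2^d)}\left(\frac{1}{r_0}+\frac{1}{\varepsilon}\right)^d.
\end{equation*}
The only slightly delicate point is the choice of the contraction parameter $t$: it must be large enough that the shrunken balls fit in $X$ (which needs $t \leq \varepsilon/(r_0+\varepsilon)$ so that $\rho \leq t\,r_0$ works with the convex-combination argument) yet small enough that the pairwise separation is not reduced by more than a factor of two (so that the disjointness of the inflated $\rho$-balls is preserved from the $\varepsilon$-separation of the $x_i$). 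The specific choice $t = \varepsilon/(r_0 + \varepsilon)$ is precisely the equalizing value that saturates both constraints simultaneously; the rest of the proof is routine.
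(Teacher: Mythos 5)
Your argument is correct. You work with a maximal $2\varepsilon$-separated set in $X$ (noting directly that it is a $2\varepsilon$-net, rather than citing the packing--covering inequality), and you then make the balls fit \emph{inside} $X$ by contracting each center toward the inscribed-ball center $a$ with the scale factor $t=\varepsilon/(r_0+\varepsilon)$; convexity is used to see that $y_i+\rho B_2^d=t(a+r_0 B_2^d)+(1-t)x_i\subseteq X$. The algebra is right: $\|y_i-y_j\|=(1-t)\|x_i-x_j\|\geq 2\rho$, so the $\rho$-balls are essentially disjoint, and $1/\rho=1/r_0+1/\varepsilon$ yields the stated bound.

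The paper's proof goes the other way around: it packs $\varepsilon$-balls around the separated points $x_i$ without moving them, so these balls sit in the \emph{dilated} set $X+\varepsilon B_2^d$, and then uses convexity through the Minkowski identity $s_1 X+s_2 X=(s_1+s_2)X$ together with $r_0 B_2^d\subseteq X-a$ to bound $\lambda^d(X+\varepsilon B_2^d)\leq (1+\varepsilon/r_0)^d\lambda^d(X)$. The two routes are genuinely dual applications of the same convexity: one shrinks the packing centers so the balls fit inside $X$, the other keeps the centers and shows the inflated $X$ is a contraction-dilation of $X$ itself. Yours is arguably more intrinsic (everything stays inside $X$), while the paper's is a bit quicker to state because it offloads the $\covering{2\varepsilon}{X}\leq\packing{\varepsilon}{X}$ step to a cited reference. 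Both give exactly the same bound.
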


\begin{proof}
For this proof we use \emph{packing numbers}, which for $\varepsilon>0$ are defined by
\[ 
\packing{\varepsilon}{X} \coloneqq \max\Bigl\{n\geq 1:\ \exists x_1,\ldots,x_n\in X\text{ with }\|x_i-x_j\|_{\ell_2^d}>2\varepsilon\ \forall i\not=j\Bigr\}
\eqspace.
\]
Recall that $\packing{2\varepsilon}{X} \leq \covering{2\varepsilon}{X} \leq \packing{\varepsilon}{X}$ holds for all $\varepsilon>0$, see e.g.\ \cite[Theorem~IV]{KoTi1961}. Consequently, it is enough to bound $\packing{\varepsilon}{X}$ by the right hand side of \eqref{eq:decomposition:volume}.
For $\varepsilon>0$ we set $n\coloneqq\packing{\varepsilon}{X}$ and choose $x_1,\ldots, x_n\in X$ with $\|x_i - x_j\|_{\ell_2^d}>2\varepsilon$ for all $i\not=j$. Then the sets $x_i + \varepsilon B_2^d$ are disjoint subsets of $X+\varepsilon B_2^d$ and hence
\begin{equation*}
n\varepsilon^d \lambda^d(B_2^d) 
= \lambda^d\Bigl(\bigcup_{i=1}^n \bigl(x_i + \varepsilon B_2^d\bigr)\Bigr) 
\leq \lambda^d(X+\varepsilon B_2^d)
\eqspace. 
\end{equation*}
Since $X$ is convex we have $s_1 X + s_2 X = (s_1+s_2)X$ for $s_1,s_2>0$. 
Together with $r_0 B_2^d \subseteq X - a$ we get
\begin{equation*}
X+\varepsilon B_2^d 
= X + \frac{\varepsilon}{r_0}\cdot r_0 B_2^d 
\subseteq X + \frac{\varepsilon}{r_0}\cdot (X-a)
=\Bigl(1+\frac{\varepsilon}{r_0}\Bigr)X - \frac{\varepsilon}{r_0}a
\eqspace.
\end{equation*}
Both bounds together yield
$n\varepsilon^d \lambda^d(B_2^d) \leq \lambda^d(X) (1+\sfrac{\varepsilon}{r_0})^d$,
which gives the assertion.
\end{proof}

%-------------------------------------------------------
%\appendix
%-------------------------------------------------------

%\input{appendix.tex}

\subsection*{Acknowledgment}
% !TeX spellcheck = en_US
% !TeX encoding = ISO-8859-1
%
The authors thank the International Max Planck Research School for Intelligent Systems (IMPRS-IS)
for supporting Simon Fischer.
%
%The author thanks the International Max Planck Research School for Intelligent Systems (IMPRS-IS) for its support.
%
%Finally, I am thankful to the International Max Planck Research School for Intelligent Systems (IMPRS-IS) for its support.

%-------------------------------------------------------
% bibliography
%-------------------------------------------------------

\begin{singlespace}
\bibliographystyle{bibliographystyle}
\bibliography{literatur}
\end{singlespace}

%-------------------------------------------------------

\end{document}